%
%
%
%
%
%
\RequirePackage{fix-cm}
\documentclass{svjour3}                     
\smartqed  
\usepackage{graphicx}
\usepackage{amsmath}
\usepackage{amsfonts}
\usepackage{amssymb}
%
%
%
%
%
\begin{document}

\title{Optimal control of systems governed by Dirichlet fractional Laplacian in the minimax framework
}


\titlerunning{Control problems governed by spectral Dirichlet fractional Laplacian}        

\author{Dorota Bors}


\institute{Dorota Bors \at
              Faculty of Mathematics and Computer Science, University of Lodz\\
             S. Banacha 22, 90-238 Lodz, Poland \\
              \email{bors@math.uni.lodz.pl}
}


\maketitle

\begin{abstract}
We consider optimal control problem governed by a class of partial differential equations with the spectral Dirichlet fractional Laplacian. Some sufficient condition for the existence of optimal processes are proved. The proof of the main result relies on variational structure of the problem. To show that partial differential equations with the Dirichlet fractional Laplacian have a weak solution we employ the renowned Ky Fan Theorem.

\keywords{Optimal control \and Fractional Laplacian \and Variational methods \and Saddle
points \and Stability \and Kuratowski-Painlev\'{e} limit}
\subclass{35B30 \and 49J53 \and 93C10 \and 93D99}
\end{abstract}

\section{Introduction}

Let $\Omega\subset\mathbb{R}^{n}$ for $n\geq3$ be a bounded domain with a
Lipschitz boundary$.$ We consider a boundary value problem for
nonlinear nonlocal vector equation
\begin{equation}
(-\Delta)^{\alpha/2}\psi\left(  x\right)  +f\left(  x,\psi\left(  x\right)
,w\left(  x\right)  \right)  =0\text{ in }\Omega\text{,} \label{0.1}%
\end{equation}%
\begin{equation}
\psi\left(  x\right)  =0\text{ on }\partial\Omega\label{0.2}%
\end{equation}
where a vector function $\psi$ belongs to some fractional Sobolev space
$H_{0}^{\alpha/2},$ a control $w$ belongs to $L^{p}$ and $\alpha\in(1,2)$.
The problems involving different notions of the fractional Laplacian attracted
in the recent years a lot of attention motivated by the problems in finances
\cite{App}, mechanics \cite{BerSag,BogBycKul} hydrodynamics
\cite{BonVaz,CafSalSil,CafVas,Vaz2}, elastostatics \cite{BerSag} or
probability \cite{App,BogBycKul,CheSon}. It should be moreover noted that at
least two notions of fractional Laplace operator coexist: the first the
Dirichlet fractional Laplacian defined by the spectral properties of the
Dirichlet Laplace operator, see \cite{BarColPab,CabTan} and the second one
defined via the singular integral or the infinitesimal generator of the
L\'{e}vy semigroup, for a list of relevant references, see
\cite{App,BanKul,BogBycKul,CheSon,Val}. In this paper we use the
Dirichlet fractional Laplacian set in the spectral framework. The problems
governed by the Dirichlet fractional Laplacian can be seen as a natural
extensions of the problems discussed in \cite{Bor1,Bor2,WalLedSch}
involving the standard Laplace operator. Specifically, we focus our attention
on the  of the solutions on the functional parameters and
then on the existence of the optimal solutions minimizing some cost
functional. For related results concerning optimal solution we refer the
interested readers, for example, to papers \cite{BerSag,Bor2,DalGer,WalLedSch}. The
framework requires the minimax geometry (cf. \cite{Rab,Wil}) for
concave-convex functionals of action allowing by Ky Fan Theorem the existence
of saddle point solutions. For related results involving some notions of the
fractional Laplacian, see, among others, papers
\cite{BraColPab,SerVal2} with the minimax geometry setting.

To be more specific, consider a problem with boundary data $u=v =0$ on $\partial\Omega$
\begin{equation}
\left\{
\begin{array}
[c]{l}%
-(-\Delta)^{\alpha/2}u\left(  x\right)  +G_{u}\left(  x,u\left(  x\right)
,v\left(  x\right)  ,w\left(  x\right)  \right)  =0\\
(-\Delta)^{\alpha/2}v\left(  x\right)  +G_{v}\left(  x,u\left(  x\right)
,v\left(  x\right)  ,w\left(  x\right)  \right)  =0
\end{array}
\right.  \label{0.3}%
\end{equation}
Clearly, the above problem is a particular case of
$(\ref{0.1})-(\ref{0.2})$ with $\psi=\left(  -u,v\right)  $ and $f=\left(
G_{u},G_{v}\right)  .$ We prove in Section \ref{existence_section} that
control problem $(\ref{0.3})$ possesses at least one weak solution
for any control $w$. The results concerning the continuous dependence of weak
solution on controls are discussed in Section \ref{dependence_section}.
Without going into details, for a given control $w_{k},$ denote by $\left(
u_{k},v_{k}\right)  $ a weak solution of problem $(\ref{0.3})$,
then if the sequence $\left\{w_{k}\right\}$ tends to
$w_{0}$ in appropriate topology of $L^{p},$ then the sequence $\left\{
\left(  u_{k},v_{k}\right)  \right\} $ tends to $\left(
u_{0},v_{0}\right)  $ in the strong topology of $H_{0}^{\alpha/2}\times
H_{0}^{\alpha/2}.$ In other words, we have proved that boundary value problem
$(\ref{0.3})$ is well-posed, i.e. the solution exists and it
continuously depends on controls. Section \ref{optimal_section} is devoted to
the investigation of optimal control problem. The proof of the existence of
the optimal solution, which is the main result of the paper, relies on the
continuous dependence results from Section \ref{dependence_section}. Finally,
some examples are presented.


\section{Statement of the problem\label{statement_section}}

Throughout the paper, we shall assume that $\Omega\subset\mathbb{R}^{n}$ with
$n>2$ is a bounded domain with a Lipschitz boundary. 
Moreover, we shall use spectral properties of the
fractional Laplacian in the case of bounded domain $\Omega$ with smooth
boundary. The powers $\left(  -\Delta\right)  ^{\alpha/2}$ of the positive
Laplace operator $\left(  -\Delta\right)  ,$ in a bounded domain with zero
Dirichlet boundary data are defined through the spectral decomposition using
the powers of the eigenvalues of the original operator. Let $(z_{k},\rho_{k})$
for $k\in\mathbb{N}$ be the system of the eigenfunctions and eigenvalues of
the Laplace operator $\left(  -\Delta\right)  $ on $\Omega$ with the
homogeneous Dirichlet condition on $\partial\Omega$. Then $(z_{k},\rho
_{k}^{\alpha/2})$ for $k\in\mathbb{N}$ is the system of the eigenfunctions and
eigenvalues of the fractional Laplacian $\left(  -\Delta\right)  ^{\alpha/2}$
on $\Omega,$ also with the homogeneous boundary Dirichlet condition$.$ By
$H_{0}^{\alpha/2}\left(  \Omega\right)  ,$ we can denote the space of
functions $z=z\left(  x\right)  $ defined on a bounded, smooth domain
$\Omega\subset\mathbb{R}^{n},\;n\geq3,$ such that $z=\sum_{k=1}^{\infty}%
a_{k}z_{k}$ and $\sum_{k=1}^{\infty}a_{k}^{2}\rho_{k}^{\alpha/2}<\infty$, with
the norm defined by the formula
\[
\left\Vert z\right\Vert _{H_{0}^{\alpha/2}\left(  \Omega\right)  }^{2}%
=\sum_{k=1}^{\infty}a_{k}^{2}\rho_{k}^{\alpha/2}=\left\Vert (-\Delta
)^{\alpha/4}z\right\Vert _{L^{2}\left(  \Omega\right)  }^{2},
\]
cf. \cite[Prop. 4.4]{NezPalVal}. The Dirichlet fractional Laplacian acts
on $z=\sum_{k=1}^{\infty}a_{k}z_{k}$
as
\[
(-\Delta)^{\alpha/2}z=\sum_{k=1}^{\infty}a_{k}\rho_{k}^{\alpha/2}z_{k}.
\]
There exists also a different notion of the fractional Laplacian, defined via
singular integral on the whole of $\mathbb{R}^{n}$ 
which can be restricted to the functions with
some values on $\Omega$ and zero value outside the set $\Omega.$ It should be
underlined, however, that it leads to nonequivalent definition and therefore
is often referred to as the restricted fractional Laplacian as in
\cite{BonVaz,RosSer1} not to be confused with the spectral Dirichlet
fractional Laplacian used in this paper. For the differences between two
notions of the fractional Laplacian one can see, for example,
\cite{BanKul,BogBycKul,CheSon,SerVal4}, where the spectral analysis of both
the operators were carried over.

It is worth reminding the reader that for a bounded domain with a Lipschitz
boundary, the fractional Sobolev space $H^{\alpha/2}\left(  \Omega\right)  $
is compactly embedded into $L^{s}\left(  \Omega\right)  $ for $s\in\left[
1,2_{\alpha}^{\ast}\right)  $ where $2_{\alpha}^{\ast}=2n/\left(
n-\alpha\right)  $ for $n>2$ and the inequality%
\[
\left\Vert
z\right\Vert _{L^{s}\left(  \Omega\right)  }\leq C\left\Vert z\right\Vert _{H^{\alpha/2}\left(  \Omega\right)  }%
\]
holds, cf. \cite[Corollary 7.2]{NezPalVal} and \cite{BarColPab}. Recall the fractional Poincar\'{e} inequality
\begin{equation}
\rho_{1}^{\alpha/2}\int\nolimits_{\Omega}\left\vert u\left(  x\right)
\right\vert ^{2}dx\leq\int\nolimits_{\Omega}\left\vert (-\Delta)^{\alpha
/4}u\left(  x\right)  \right\vert ^{2}dx. \label{fPoincare}%
\end{equation}
Note that $\rho_{1}$ is the principal eigenvalue of the Laplacian and  $(-\Delta)^{\alpha/4}u_{1}=\rho_{1}^{\alpha/4}u_{1}.$ Moreover, 
$\left\Vert u\right\Vert _{H_{0}^{\alpha/2}\left(  \Omega\right)  }^{2}=\int
\nolimits_{\Omega}\left\vert (-\Delta)^{\alpha/4}u\left(  x\right)
\right\vert ^{2}dx$ is weakly lower semicontinuous, convex and coercive as the
norm in the reflexive space, for details see \cite{AutPuc,Gre}.

In this paper we consider systems of nonlinear fractional differential
equations%
\begin{equation}
\left\{
\begin{array}
[c]{l}%
-(-\Delta)^{\alpha/2}u\left(  x\right)  +G_{u}\left(  x,u\left(  x\right)
,v\left(  x\right)  ,w\left(  x\right)  \right)  =0\\
(-\Delta)^{\alpha/2}v\left(  x\right)  +G_{v}\left(  x,u\left(  x\right)
,v\left(  x\right)  ,w\left(  x\right)  \right)  =0\\
u\left(  x\right)  =0,\text{ }v\left(  x\right)  =0
\end{array}
\right.
\begin{array}
[c]{l}%
\text{in }\Omega\\
\text{in }\Omega\\
\text{on }\partial\Omega
\end{array}
\label{1.1}%
\end{equation}
where $u\in H_{0}^{\alpha/2}\left(  \Omega\right)  $, $v\in H_{0}^{\alpha
/2}\left(  \Omega\right)  ,$ $G$ is a scalar function on
$\Omega\times\mathbb{R}^{2+m}$ and $w\in\mathcal{W}=\left\{  w\in L^{p}\left(  \Omega,\mathbb{R}^{m}\right)  :w\left(
x\right)  \in M\text{ for a.e. }x\in\Omega\right\},$
where $M\subset\mathbb{R}^{m}$ is convex and bounded. $\mathcal{W}$
will be referred to as a set of distributed parameters or controls.

We shall investigate the question of the continuous dependence on control
$w\in\mathcal{W}$ of weak solutions of problem $(\ref{1.1})$ in the space
$\mathbb{H}_{0}^{\alpha/2}=H_{0}^{\alpha/2}\left(  \Omega\right)  \times
H_{0}^{\alpha/2}\left(  \Omega\right)  .$ We replace this question, under some
assumption about the function $G=G\left(  x,u,v,w\right)  ,$ with the question
of the continuous dependence on controls of saddle points of the functional of
action $F_{w}\left(  u,v\right)$ for problem $(\ref{1.1})$ of the form
\begin{equation}
\int\nolimits_{\Omega}\left(  \tfrac{1}{2}\left\vert
(-\Delta)^{\alpha/4}v\left(  x\right)  \right\vert ^{2}-\tfrac{1}{2}\left\vert
(-\Delta)^{\alpha/4}u\left(  x\right)  \right\vert ^{2}+G\left(  x,u\left(
x\right)  ,v\left(  x\right)  ,w\left(  x\right)  \right)  \right)  dx,
\label{1.2}%
\end{equation}
defined on the space $\mathbb{H}_{0}^{\alpha/2}$ with the norm
$
\left\Vert \left(  u,v\right)  \right\Vert _{\mathbb{H}_{0}^{\alpha/2}}%
^{2}=\left\Vert u\right\Vert _{H_{0}^{\alpha/2}\left(  \Omega\right)  }%
^{2}+\left\Vert z\right\Vert _{H_{0}^{\alpha/2}\left(  \Omega\right)  }^{2}.
$
Let us recall that a pair $\left(  u_{0},v_{0}\right)  \in\mathbb{H}%
_{0}^{\alpha/2}$ is a saddle point of a functional $F_{w}$ if
\[
F_{w}\left(  u,v_{0}\right)  \leq F_{w}\left(  u_{0},v_{0}\right)  \leq
F_{w}\left(  u_{0},v\right)
\]
for any $u\in H_{0}^{\alpha/2}\left(  \Omega\right)  $ and $v\in H_{0}%
^{\alpha/2}\left(  \Omega\right)  $ which is equivalent to
\[
\sup_{u}\inf_{v}F_{w}\left(  u,v\right)  =\inf_{v}\sup_{u}F_{w}\left(
u,v\right)  =F_{w}\left(  u_{0},v_{0}\right)
\]
provided that $\sup_{u}\inf_{v}F_{w}\left(  u,v\right)  $ and $\inf_{v}%
\sup_{u}F_{w}\left(  u,v\right)  $ are finite and attainable. Moreover, a pair
$\left(  u,v\right)  \in\mathbb{H}_{0}^{\alpha/2}$ is the the weak solution of
problem $(\ref{1.1})$ if, for any $\left(  g,h\right)  \in\mathbb{H}%
_{0}^{\alpha/2},$ the following equalities, compare with \cite[Definition 2.1]{BarColPab}, hold
\[
\left\{
\begin{array}
[c]{r}%
-\int\limits_{\Omega}(-\Delta)^{\alpha/4}u\left(  x\right)  (-\Delta
)^{\alpha/4}g\left(  x\right)  dx+\int\limits_{\Omega}G_{u}\left(  x,u\left(
x\right)  ,v\left(  x\right)  ,w\left(  x\right)  \right)  g\left(  x\right)
dx=0\text{ in }\Omega,\\
\int\limits_{\Omega}(-\Delta)^{\alpha/2}v\left(  x\right)  (-\Delta
)^{\alpha/4}h\left(  x\right)  dx+\int\limits_{\Omega}G_{v}\left(  x,u\left(
x\right)  ,v\left(  x\right)  ,w\left(  x\right)  \right)  h\left(  x\right)
dx=0\text{ in }\Omega.
\end{array}
\right.
\]

Let us make the following assumptions:

\begin{enumerate}
\item[(A1)] $G,G_{u},G_{v}$ are Carath\'{e}odory functions, i.e. they are
measurable with respect to $x$ for any $\left(  u,v,w\right)  \in
\mathbb{R}^{2+m}$ and continuous w. r. t. $\left(  u,v,w\right)  $ for
a.e. $x\in\Omega$;

\item[(A2)] for $p=\infty,$ there exists $c>0$ such that for $z\in\{u,v\}$
\begin{align*}
\left\vert G\left(  x,u,v,w\right)  \right\vert  &  \leq c\left(  1+\left\vert
u\right\vert ^{s}+\left\vert v\right\vert ^{s}\right)  ,\\
\left\vert G_{z}\left(  x,u,v,w\right)  \right\vert  &  \leq c\left(
1+\left\vert u\right\vert ^{s-1}+\left\vert v\right\vert ^{s-1}\right)  ,
\end{align*}
where $s\in\left(  1,2_{\alpha}^{\ast}\right)  $ for $n\geq3$ and $2_{\alpha
}^{\ast}=\frac{2n}{n-\alpha}$, $x\in\Omega$ a.e., $u\in\mathbb{R},$
$v\in\mathbb{R}$ and $w\in M;$ if $p\in\lbrack1,\infty),$ there exists
$c>0$ such that for any $z\in\{u,v\}$
\begin{align*}
\left\vert G\left(  x,u,v,w\right)  \right\vert  &  \leq c\left(  1+\left\vert
u\right\vert ^{s}+\left\vert v\right\vert ^{s}+\left\vert w\right\vert
^{p}\right)  ,\\
\left\vert G_{z}\left(  x,u,v,w\right)  \right\vert  &  \leq c\left(
1+\left\vert u\right\vert ^{s-1}+\left\vert v\right\vert ^{s-1}+\left\vert
w\right\vert ^{p-\frac{p}{s}}\right)  ,
\end{align*}
where $s\in\left(  1,2_{\alpha}^{\ast}\right)  $ for $n\geq3$ and a.e.
$x\in\Omega$, $u\in\mathbb{R}$, $v\in\mathbb{R}$ and $w\in\mathbb{R}^{m};$

\item[(A3)] for any $u\in H_{0}^{\alpha/2}\left(  \Omega\right)  $, there
exist $b\in\mathbb{R}$, $\beta_{1}\in
L^{2}\left(  \Omega\right)  $, $\gamma_{1}\in L^{1}\left(  \Omega\right)  $,
such that
\[
G\left(  x,u\left(  x\right)  ,v,w\right)  \geq-b\left\vert v\right\vert
^{2}-\beta_{1}\left(  x\right)  v-\gamma_{1}\left(  x\right)
\]
for any $v\in\mathbb{R}$, $w\in M$ and a.e. $x\in\Omega$, where $\rho
_{1}^{\alpha/2}>2b$ and $\rho_{1}$ is the principal eigenvalue of the Laplace
operator with the zero Dirichlet boundary values;

\item[(A4)] for any $v\in H_{0}^{\alpha/2}\left(  \Omega\right)  $, there
exist $B\in\mathbb{R}$, $\beta_{2}\in
L^{2}\left(  \Omega\right)  $, $\gamma_{2}\in L^{1}\left(  \Omega\right)  $,
such that
\[
G\left(  x,u,v\left(  x\right)  ,w\right)  \leq B\left\vert u\right\vert
^{2}+\beta_{2}\left(  x\right)  u+\gamma_{2}\left(  x\right)
\]
for any $u\in\mathbb{R}$, $w\in M$ and $x\in\Omega$ a.e., where $\rho
_{1}^{\alpha/2}>2B$ and $\rho_{1}$ is the principal eigenvalue of the Laplace
operator with the zero Dirichlet boundary values$;$

\item[(A5)] for any $w\in\mathcal{W},$ the functional $F_{w}$ is concave with
respect to $u$ for any $v\in H_{0}^{\alpha/2}\left(  \Omega\right)  $ and
convex with respect to $v$ for any $u\in H_{0}^{\alpha/2}\left(
\Omega\right)  $; shortly, for any $w\in\mathcal{W},$ the functional $F_{w}$
is concave-convex, where $F_{w}$ is defined in $(\ref{1.2})$.
\end{enumerate}


\section{Existence of saddle points\label{existence_section}}

In this section we shall focus our attention on study of the variational
formulation of problem associated with fractional differential system $\left(
\ref{1.1}\right)  $. We shall prove that for any $w\in\mathcal{W},$ there
exists a saddle point of the function of action defined in $(\ref{1.2}).$
Moreover, we shall demonstrate that the set of all saddle points is bounded.
In doing this we will also benefit from having the following notation. For any
$w\in\mathcal{W}$ denote by $S_{w}$ the set all saddle point of $F_{w}$, i.e.
\[
S_{w}=\left\{  \left(  u_{w},v_{w}\right)  \in\mathbb{H}_{0}^{\alpha/2}%
:F_{w}\left(  u,v_{w}\right)  \leq F_{w}\left(  u_{w},v_{w}\right)  \leq
F_{w}\left(  u_{w},v\right)  \right\}  .
\]
To prove that $F_{w}$ possesses the saddle point we shall apply the 
Ky Fan's Theorem \cite[Theorem 5.2.2]{Nir}.
Now we provide the statement of the theorem on the following properties of the
set of saddle points: nonemptiness and boundedness.

\begin{theorem}
[On the existence of saddle points]\label{existence}If conditions $(A1)-(A5)$
are satisfied, then for any $w\in\mathcal{W}$, there exists at least one
saddle point $\left(  u_{w},v_{w}\right)  \in\mathbb{H}_{0}^{\alpha/2}$ for
the functional $F_{w}$ defined in $(\ref{1.2})$, and moreover there are some
balls $B_{1}\left(  0,r_{1}\right)  \subset H_{0}^{\alpha/2}\left(
\Omega\right)  $ and $B_{2}\left(  0,r_{2}\right)  \subset H_{0}^{\alpha
/2}\left(  \Omega\right)  $ such that, for all $w\in\mathcal{W},$
$S_{w}\subset B_{1}\left(  0,r_{1}\right)  \times B_{2}\left(  0,r_{2}\right)
\subset\mathbb{H}_{0}^{\alpha/2}.$\newline If the functional $F_{w}$ is
additionally assumed to be strictly concave - strictly convex, then the saddle
point is unique.
\end{theorem}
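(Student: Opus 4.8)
The plan is to apply the Ky Fan Theorem with $X=Y=H_{0}^{\alpha/2}\left(\Omega\right)$ endowed with their weak topologies and $A=B=H_{0}^{\alpha/2}\left(\Omega\right)$, taking the concave variable to be $u$ and the convex variable to be $v$. Conditions (a) and (b) of that theorem are then immediate: by $(A5)$ the map $F_{w}\left(u,\cdot\right)$ is convex and $F_{w}\left(\cdot,v\right)$ is concave, while the Remark asserting the weak semicontinuity of $F_{w}$ (following the assumptions) gives that these maps are, respectively, weakly lower and weakly upper semicontinuous. The whole effort therefore concentrates on the coercivity-type hypothesis (c), which is where I expect the main work to lie.

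To verify (c) I would first extract from $(A3)$, $(A4)$ and the fractional Poincar\'{e} inequality $(\ref{fPoincare})$ the one-sided estimate
\[
F_{w}\left(u,v\right)\le\tfrac{1}{2}\left\Vert v\right\Vert_{H_{0}^{\alpha/2}\left(\Omega\right)}^{2}-\tfrac{1}{2}\Big(1-\tfrac{2B}{\rho_{1}^{\alpha/2}}\Big)\left\Vert u\right\Vert_{H_{0}^{\alpha/2}\left(\Omega\right)}^{2}+C\left\Vert u\right\Vert_{H_{0}^{\alpha/2}\left(\Omega\right)}+C',
\]
together with the symmetric lower bound coming from $(A3)$. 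Since $\rho_{1}^{\alpha/2}>2B$ and $\rho_{1}^{\alpha/2}>2b$, both quadratic coefficients are strictly positive, so for fixed $v$ the functional $F_{w}\left(\cdot,v\right)$ is anti-coercive (tends to $-\infty$) and for fixed $u$ the functional $F_{w}\left(u,\cdot\right)$ is coercive. Testing $\sup_{u}F_{w}\left(u,v\right)\ge F_{w}\left(0,v\right)$ with $(A3)$ shows that $\inf_{v}\sup_{u}F_{w}\left(u,v\right)$ is bounded below by a finite constant; choosing $\lambda$ strictly below it and $v_{0}=0$, the set $\left\{u:F_{w}\left(u,0\right)\ge\lambda\right\}$ is bounded by anti-coercivity, convex as a superlevel set of a concave map, and weakly closed by weak upper semicontinuity, hence weakly compact by reflexivity of $H_{0}^{\alpha/2}\left(\Omega\right)$. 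This yields (c), and the Ky Fan Theorem gives $\sup_{u}\inf_{v}F_{w}=\inf_{v}\sup_{u}F_{w}=:c$.

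Next I would promote this minimax identity to an actual saddle point by an attainment argument. The marginal functionals $\psi\left(v\right)=\sup_{u}F_{w}\left(u,v\right)$ and $\varphi\left(u\right)=\inf_{v}F_{w}\left(u,v\right)$ inherit, respectively, convexity with weak lower semicontinuity and coercivity, and concavity with weak upper semicontinuity and anti-coercivity, so $\inf_{v}\psi$ is attained at some $v_{0}$ and $\sup_{u}\varphi$ at some $u_{0}$. Because $\varphi\left(u_{0}\right)=c=\psi\left(v_{0}\right)$, the chain $c=\inf_{v}F_{w}\left(u_{0},v\right)\le F_{w}\left(u_{0},v_{0}\right)\le\sup_{u}F_{w}\left(u,v_{0}\right)=c$ forces $F_{w}\left(u_{0},v_{0}\right)=c$ and then both defining inequalities of a saddle point, so $\left(u_{0},v_{0}\right)\in S_{w}$.

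For the uniform bound on $S_{w}$, I would use that any $\left(u_{w},v_{w}\right)\in S_{w}$ satisfies $F_{w}\left(0,v_{w}\right)\le F_{w}\left(u_{w},v_{w}\right)\le F_{w}\left(u_{w},0\right)$. Inserting the estimates above, the right inequality sandwiches $\tfrac{1}{2}\left(1-2B/\rho_{1}^{\alpha/2}\right)\left\Vert u_{w}\right\Vert^{2}$ between a $w$-independent lower bound for the saddle value (from $(A3)$ with $u=0$) and a $w$-independent constant, yielding $\left\Vert u_{w}\right\Vert\le r_{1}$; the left inequality handles $\left\Vert v_{w}\right\Vert\le r_{2}$ symmetrically. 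The point is that the data $b,B,\beta_{i},\gamma_{i}$ in $(A3)$--$(A4)$ are independent of $w\in M$, so $r_{1},r_{2}$ are uniform in $w$. Finally, under strict concavity--convexity I would invoke the interchange property: if $\left(u_{1},v_{1}\right),\left(u_{2},v_{2}\right)\in S_{w}$ then $\left(u_{2},v_{1}\right)$ is also a saddle point, so $u_{1},u_{2}$ both maximize the strictly concave $F_{w}\left(\cdot,v_{1}\right)$ and $v_{1},v_{2}$ both minimize the strictly convex $F_{w}\left(u_{2},\cdot\right)$, forcing $u_{1}=u_{2}$ and $v_{1}=v_{2}$. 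The main obstacle throughout is the careful verification of (c) -- producing weak compactness of the superlevel set out of the Poincar\'{e}-based coercivity while keeping every constant uniform in $w$.
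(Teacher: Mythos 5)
Your proposal is correct and takes essentially the same route as the paper's own proof: Ky Fan's theorem in the weak topology, coercivity/anti-coercivity of the marginal functionals $\inf_{v}F_{w}\left(\cdot,v\right)$ and $\sup_{u}F_{w}\left(u,\cdot\right)$ derived from $(A3)$--$(A4)$ via the fractional Poincar\'{e} inequality, attainment of their extrema, the same chain of inequalities producing the saddle point, and the same $w$-independent sandwiching for the bounds $r_{1},r_{2}$. The only addition is your explicit rectangular-property argument for uniqueness under strict concavity--convexity, which the paper states but does not prove.
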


\begin{proof}
Let $w\in\mathcal{W}$ be fixed$.$ First note that the functional $F_{w}\left(
u,\cdot\right)  $ is coercive for any $u\in H_{0}^{^{\alpha/2}}\left(
\Omega\right)  .$ From assumption $(A3)$, for any $u\in H_{0}^{^{\alpha/2}%
}\left(  \Omega\right)  $, there exist a constant $b$ and functions $\beta
_{1}\in L^{2}\left(  \Omega\right)  $, $\gamma_{1}\in L^{1}\left(
\Omega\right)  $ such that
\[
F_{w}\left(  u,v\right)  \geq\int\nolimits_{\Omega}\left(  \tfrac{1}%
{2}\left\vert (-\Delta)^{\alpha/4}v\left(  x\right)  \right\vert
^{2}-b\left\vert v\left(  x\right)  \right\vert ^{2}-\beta_{1}\left(
x\right)  v\left(  x\right)  -\gamma_{1}\left(  x\right)  \right)  dx.
\]
The application of the fractional Poincar\'{e} inequality $\left(
\ref{fPoincare}\right)  $ and the Schwartz inequality lead to the following
estimate:
\[
F_{w}\left(  u,v\right)  \geq\left(  \tfrac{1}{2}-b\rho_{1}^{-\alpha
/2}\right)  \left\Vert v\right\Vert _{H_{0}^{^{\alpha/2}}}^{2}-C_{1}\left\Vert
v\right\Vert _{H_{0}^{^{\alpha/2}}}-C_{2}%
\]
where $C_{1}$, $C_{2}$ are some nonnegative constants. Since $\frac{1}%
{2}-b\rho_{1}^{-\alpha/2}>0$, the functional $F_{w}\left(  u,\cdot\right)  $
is coercive. As a result, for any $u\in H_{0}^{^{\alpha/2}}\left(
\Omega\right)  ,$ the functional $F_{w}\left(  u,\cdot\right)  $ attains its
minimum if we also use the property of the weak lower semicontinuity of this
functional$.$ Subsequently, for any $u\in H_{0}^{^{\alpha/2}}\left(
\Omega\right)  ,$ denote
\[
F_{w}^{-}\left(  u\right)  =\min\limits_{v}F_{w}\left(  u,v\right)  .
\]
Furthermore, from $(A4)$ and $F_{w}^{-}\left(  u\right)  \leq F_{w}\left(  u,0\right)$ we obtain
\[
F_{w}^{-}\left(  u\right)  \leq\int
\nolimits_{\Omega}\left(  -\tfrac{1}{2}\left\vert (-\Delta)^{\alpha/4}u\left(
x\right)  \right\vert ^{2}+B\left\vert u\left(  x\right)  \right\vert
^{2}+\beta_{2}\left(  x\right)  u\left(  x\right)  +\gamma_{2}\left(
x\right)  \right)  dx
\]
for some constant $B$ and functions $\beta_{2}\in L^{2}\left(  \Omega\right)
$, $\gamma_{2}\in L^{1}\left(  \Omega\right)  .$\newline Using the fractional
Poincar\'{e} inequality $\left(  \ref{fPoincare}\right)  $ and the Schwartz
inequality, one gets 
\begin{equation}
F_{w}^{-}\left(  u\right)  \leq\left(  -\tfrac{1}{2}+B\rho_{1}^{-\alpha
/2}\right)  \left\Vert u\right\Vert _{H_{0}^{^{\alpha/2}}}^{2}+D_{1}\left\Vert
u\right\Vert _{H_{0}^{^{\alpha/2}}}+D_{2}=p\left(  u\right)  \label{2.1}%
\end{equation}
where $D_{1}$, $D_{2}\geq0.$ It is easily seen that the functional $F_{w}^{-}$
is weakly upper semicontinous$.$ Indeed, let $u_{k}$ tend to $u_{0}$ weakly in
$H_{0}^{^{\alpha/2}}\left(  \Omega\right)  $, and let $\left\{  v_{k}\right\}
_{k\in\mathbb{N}_{0}}$ be such that $F_{w}^{-}\left(  u_{k}\right)
=F_{w}\left(  u_{k},v_{k}\right)  =\min_{v}F_{w}\left(  u_{k},v\right)  $ for
$k\in\mathbb{N}_{0}$ 
and then
\[
\underset{k\rightarrow\infty}{\lim\sup}F_{w}^{-}\left(  u_{k}\right)
=\underset{k\rightarrow\infty}{\lim\sup}F_{w}\left(  u_{k},v_{k}\right)
\leq\underset{k\rightarrow\infty}{\lim\sup}F_{w}\left(  u_{k},v_{0}\right)
\leq F_{w}\left(  u_{0},v_{0}\right)  =F_{w}^{-}\left(  u_{0}\right)  .
\]
Since $-\frac{1}{2}+B\rho_{1}^{-\alpha/2}<0$, then for any $w\in\mathcal{W}$,
the functional $F_{w}^{-}$ attains its maximum at some point $u_{w}\in
H_{0}^{^{\alpha/2}}\left(  \Omega\right)  .$ For any point $u_{w}$ such that
\begin{equation}
F_{w}^{-}\left(  u_{w}\right)  =\max\limits_{u}F_{w}^{-}\left(  u\right)  ,
\label{2.2}%
\end{equation}
from $(A3)$ we obtain
\begin{align*}
F_{w}^{-}\left(  u_{w}\right)   &  \geq F_{w}^{-}\left(  0\right)
=\min\limits_{v}F_{w}\left(  0,v\right) \\
&  \geq\min\limits_{v}\left(  \left(  \tfrac{1}{2}-b\rho_{1}^{-\alpha
/2}\right)  \left\Vert v\right\Vert _{H_{0}^{^{\alpha/2}}}^{2}-C_{1}\left\Vert
v\right\Vert _{H_{0}^{^{\alpha/2}}}-C_{2}\right)  =\eta>-\infty
\end{align*}
where $b$, $C_{1}$, $C_{2}$, $\eta$ are some constants and $\frac{1}{2}%
-b\rho_{1}^{-\alpha/2}>0$. Note that $\eta$ does not depend on control $w$.
Moreover, it is important to notice that, for any maximizer $u_{w}$ satisfying
$(\ref{2.2})$, there exists $r_{1}>0$ such that for any $w\in\mathcal{W}$
\begin{equation}
u_{w}\in\left\{  u:F_{w}^{-}\left(  u\right)  \geq\eta\right\}  \subset
\left\{  u:p\left(  u\right)  \geq\eta\right\}  \subset B_{1}\left(
0,r_{1}\right)  \label{2.3}%
\end{equation}
where $p$ is defined in $(\ref{2.1}).$ We have checked that, for any
$w\in\mathcal{W}$, there exists a $u_{w}$
\[
F_{w}^{-}\left(  u_{w}\right)  =\max\limits_{u}F_{w}^{-}\left(  u\right)
=\max\limits_{u}\left[  \min\limits_{v}F_{w}\left(  u,v\right)  \right]  .
\]
One can show that, for any $w\in\mathcal{W}$, there
exists at least one $v_{w}\in H_{0}^{^{\alpha/2}}\left(  \Omega\right)  $ 
\begin{equation}
F_{w}^{+}\left(  v_{w}\right)  =\min\limits_{v}F_{w}^{+}\left(  v\right)
=\min\limits_{v}\left[  \max\limits_{u}F_{w}\left(  u,v\right)  \right]
\label{2.4}%
\end{equation}
where $F_{w}^{+}\left(  v\right)  =\max\limits_{u}F_{w}\left(  u,v\right)  $
and there is $r_{2}>0$ such that for $v_{w}$ satisfying $(\ref{2.4})$
\begin{equation}
v_{w}\in B_{2}\left(  0,r_{2}\right)  \label{2.4'}%
\end{equation}
Furthermore, since $v\rightarrow\max_{u}F_{w}\left(  u,v\right)  $ attains its minimum, 
there is $\lambda$ such that%
\begin{align*}
&\lambda<\min_{v}\max\limits_{u}F_{w}\left(  u,v\right)  \leq\max_{u}%
F_{w}\left(  u,0\right),\\
&\left\{  u\in H_{0}^{^{\alpha/2}}\left(  \Omega\right)  :F_{w}\left(
u,0\right)  \geq\lambda\right\}  \subset\left\{  u\in H_{0}^{^{\alpha/2}%
}\left(  \Omega\right)  :p\left(  u\right)  \geq\lambda\right\}  =A_{0}%
\end{align*}
where $p$ is defined in (\ref{2.1}). Moreover, since $A_{0}$ is relatively
compact in the weak topology of $H_{0}^{^{\alpha/2}}\left(  \Omega\right)  $
as it is a bounded subset of the reflexive space, it follows that the set
$\left\{  u\in H_{0}^{^{\alpha/2}}\left(  \Omega\right)  :F_{w}\left(
u,0\right)  \geq\lambda\right\}  $ is weakly compact. Additionally, by $(A5)$,
$F_{w}$ is concave-convex for any $w\in\mathcal{W}.$ In that way we have
demonstrated that all assertions of Ky Fan's Theorem are satisfied. Therefore,
$\max\nolimits_{u}\min\nolimits_{v}F_{w}\left(  u,v\right)  =\min
\nolimits_{v}\max\nolimits_{u}F_{w}\left(  u,v\right)  $ for any
$w\in\mathcal{W}.$ Subsequently, for any $v\in H_{0}^{^{\alpha/2}}\left(
\Omega\right)  ,$ we have
\begin{align*}
&F_{w}\left(  u_{w},v_{w}\right)     \leq\max\limits_{u}F_{w}\left(
u,v_{w}\right)  =F_{w}^{+}\left(  v_{w}\right)  =\min\limits_{v}F_{w}%
^{+}\left(  v\right)  =\min\limits_{v}\left[  \max\limits_{u}F_{w}\left(
u,v\right)  \right] \\
&  =\max\limits_{u}\left[  \min\limits_{v}F_{w}\left(
u,v\right)  \right] =\max\limits_{u}F_{w}^{-}\left(  u\right)  =F_{w}^{-}\left(  u_{w}\right)
=\min\limits_{v}F_{w}\left(  u_{w},v\right)  \leq F_{w}\left(  u_{w},v\right)
.
\end{align*}
One can verify for any $u\in H_{0}^{^{\alpha/2}}\left(
\Omega\right)  $ that 
$
F_{w}\left(  u_{w},v_{w}\right)  \geq F_{w}\left(  u,v_{w}\right)  .
$
Hence, for any $u\in H_{0}^{\alpha/2}\left(  \Omega\right)  $ and $v\in
H_{0}^{\alpha/2}\left(  \Omega\right)  ,$ the following 
$
F_{w}\left(  u,v_{w}\right)  \leq F_{w}\left(  u_{w},v_{w}\right)  \leq
F_{w}\left(  u_{w},v\right)
$
holds. Therefore, for any $w\in\mathcal{W}$, there exists at least one saddle
point of $F_{w}$ and moreover by $(\ref{2.3})$ and
$(\ref{2.4'})$, $S_{w}\subset B_{1}\left(  0,r_{1}\right)  \times B_{2}\left(
0,r_{2}\right)  .$\qed 
\end{proof}

\section{Continuous dependence\label{dependence_section}}

A natural question to ask is how $\left(  u,v\right)  $ varies as $w$ changes.
Now we look for conditions under which solutions of the variational problem
are stable. By stability here we understand the continuous dependence of
saddle points on controls. In order to state these conditions succinctly, we
introduce some notation and terminology. Let $\left\{  w_{k}\right\}
_{k\in\mathbb{N}_{0}}$ be an arbitrary sequence of elements from
$\mathcal{W}.$ Next, by $\left\{  \varphi_{k}\right\}  _{k\in\mathbb{N}_{0}}$
we denote a sequence of functionals of action such that
\begin{equation}
\varphi_{k}\left(  u,v\right)  =F_{w_{k}}\left(  u,v\right)  ,\text{ }%
k\in\mathbb{N}_{0}, \label{2.7}%
\end{equation}
where $F_{w}$ is defined in $(\ref{1.2})$ and by $S_{k}$ the set of saddle
points of $\varphi_{k}$ for $k\in\mathbb{N}_{0}$, i.e.
\begin{equation}
S_{k}=\left\{  \left(  \bar{u},\bar{v}\right)  \in\mathbb{H}_{0}^{\alpha
/2}:\varphi_{k}\left(  \bar{u},\bar{v}\right)  =\max\limits_{u}\min
\limits_{v}\varphi_{k}\left(  u,v\right)  =\min\limits_{v}\max\limits_{u}%
\varphi_{k}\left(  u,v\right)  \right\}  . \label{2.8}%
\end{equation}

In view of Theorem \ref{existence}, there exists at least one
saddle point of the functional $\varphi_{k}$, so $S_{k}$ is
nonempty and there exist $r_{1}$, $r_{2}>0$ s. t. $S_{k}\subset
B_{1}\left(  0,r_{1}\right)  \times B_{2}\left(  0,r_{2}\right).$ Before we prove the next theorem,
we recall the definition of the upper Kuratowski-Painlev\'{e} limit of the
sets $X_{k}$ in the topological space $\left(  \mathcal{H},\tau\right)  ,$
where $\left\{  X_{k}\right\}  _{k\in\mathbb{N}}$ is a sequence of subsets of
the space $\mathcal{H}$ with topology $\tau,$ cf. \cite{AubFra}. 
The upper limit of the sequence $\left\{  X_{k}\right\}  _{k\in\mathbb{N}}$ is
defined as the set of all cluster points of sequences $\left\{  x_{k}\right\}
_{k\in\mathbb{N}}$ such that $x_{k}\in X_{k}$ for $k\in\mathbb{N}.$ The upper
limit of $\left\{  X_{k}\right\}  _{k\in\mathbb{N}}$ in $\left(
\mathcal{H},\tau\right)  $\ will be denoted by $\left(  \tau\right)
\mathrm{Lim}\sup X_{k}.$
Additionally, $X_{k}$ is said to tend to $X_{0}$ in $\left(  \mathcal{H}%
,\tau\right)  $ if $\left(  \tau\right)  \mathrm{Lim}\sup X_{k}\subset X_{0}.$
In this paper $\mathcal{H=}\mathbb{H}_{0}^{\alpha/2}$ considered with the weak
topology denoted by $\left(  w\right)  $ or the strong topology denoted by
$\left(  s\right)  $, $X_{k}=S_{k}$ where $S_{k}$ is defined in $\left(
\ref{2.8}\right)  $ and $x_{k}=\left(  u_{k},v_{k}\right)  $ where $\left(
u_{k},v_{k}\right)  $ is a saddle point of $\varphi_{k}$ defined in $\left(
\ref{2.7}\right)  .$


\begin{proposition}
\label{strong_weak}If conditions $(A1)-(A5)$ are satisfied and a sequence of
controls $w_{k}$ tends to $w_{0}$ in $L^{p}\left(  \Omega,\mathbb{R}%
^{m}\right)  $, then $\left(  w\right)  \mathrm{Lim}\sup S_{k}$ $\neq
\emptyset$ and $\left(  w\right)  \mathrm{Lim}\sup S_{k}\subset S_{0}$ in
$\mathbb{H}_{0}^{\alpha/2}$ where $S_{k}$ are given by $\left(  \ref{2.8}%
\right)  .$
\end{proposition}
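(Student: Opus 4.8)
The goal is to show that any weak cluster point of a sequence of saddle points $(u_k,v_k)\in S_k$ is itself a saddle point of the limiting functional $F_{w_0}$, and that at least one such cluster point exists. The plan is to first secure nonemptiness of the upper limit by a compactness argument, then upgrade a weakly convergent subsequence to a genuine saddle point of the limit via the variational characterization.

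First I would establish that $(w)\mathrm{Lim}\sup S_k\neq\emptyset$. By Theorem~\ref{existence}, every $S_k$ lies in the fixed product of balls $B_1(0,r_1)\times B_2(0,r_2)\subset\mathbb{H}_0^{\alpha/2}$, with $r_1,r_2$ independent of $k$. Pick any $(u_k,v_k)\in S_k$. Since $\mathbb{H}_0^{\alpha/2}$ is a Hilbert space, hence reflexive, the sequence $\{(u_k,v_k)\}$ is bounded and so admits a weakly convergent subsequence, say $(u_{k_j},v_{k_j})\rightharpoonup(u_0,v_0)$. This limit is by definition a cluster point in the weak topology, so $(u_0,v_0)\in(w)\mathrm{Lim}\sup S_k$, giving nonemptiness.

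Next I would prove the inclusion $(w)\mathrm{Lim}\sup S_k\subset S_0$. Let $(u_0,v_0)$ be an arbitrary weak cluster point, so along some subsequence (not relabeled) $(u_k,v_k)\rightharpoonup(u_0,v_0)$ weakly with $(u_k,v_k)\in S_k$. I must show $(u_0,v_0)$ is a saddle point of $F_{w_0}$, i.e.
\[
F_{w_0}(u,v_0)\leq F_{w_0}(u_0,v_0)\leq F_{w_0}(u_0,v)
\]
for all $u,v\in H_0^{\alpha/2}(\Omega)$. Fix a test element $v\in H_0^{\alpha/2}(\Omega)$. From $(u_k,v_k)\in S_k$ we have the saddle inequality $\varphi_k(u_k,v_k)\leq\varphi_k(u_k,v)=F_{w_k}(u_k,v)$. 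The strategy is to pass to the limit on both sides: on the right, since $u_k\rightharpoonup u_0$ and the control term converges because $w_k\to w_0$ strongly in $L^p$ (allowing, after a further subsequence, a.e.\ convergence and domination via $(A2)$ to handle the Nemytskii-type term $\int G(x,u_k,v,w_k)\,dx\to\int G(x,u_0,v,w_0)\,dx$), together with weak lower semicontinuity of the leading quadratic $v$-term and weak upper semicontinuity in $u$, I would obtain $\limsup_k F_{w_k}(u_k,v)\leq F_{w_0}(u_0,v)$; on the left, the weak lower semicontinuity of $F_{w_0}(u_0,\cdot)$ and weak upper semicontinuity of $F_{w_0}(\cdot,v_0)$, combined with the convergence of the control-dependent integral, yield $F_{w_0}(u_0,v_0)\leq\liminf_k\varphi_k(u_k,v_k)$. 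Chaining these gives $F_{w_0}(u_0,v_0)\leq F_{w_0}(u_0,v)$; the symmetric argument starting from $\varphi_k(u,v_k)\leq\varphi_k(u_k,v_k)$ gives the other half.

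The main obstacle I anticipate is the semicontinuity bookkeeping in passing to the limit, because the functional contains a concave $u$-part (where I need weak upper semicontinuity) and a convex $v$-part (where I need weak lower semicontinuity), and these must be combined correctly with the convergence of the interaction term $\int_\Omega G(x,u_k,v_k,w_k)\,dx$. The delicate point is that in a single saddle inequality the variables play mixed roles, so I would isolate each coordinate by comparing $(u_k,v_k)$ against fixed test points $(u_k,v)$ and $(u,v_k)$ so that only one genuinely varying weak limit enters each semicontinuity estimate. Controlling the $G$-term requires invoking the growth bounds $(A2)$ together with the compact embedding $H^{\alpha/2}(\Omega)\hookrightarrow L^s(\Omega)$ stated in the preliminaries, so that $u_k\to u_0$ and $v_k\to v_0$ \emph{strongly} in $L^s$; this strong convergence, paired with the strong $L^p$ convergence of $w_k$, is what makes the Carath\'eodory integrand converge and closes the argument.
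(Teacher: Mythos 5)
Your proof is correct, but it takes a genuinely more direct route than the paper. The paper first establishes uniform convergence $\varphi_{k}\rightrightarrows\varphi_{0}$ on $B_{1}\left(0,r_{1}\right)\times B_{2}\left(0,r_{2}\right)$ (by contradiction, via Krasnoselskii's theorem), deduces from it convergence of the saddle values $m_{k}\rightarrow m_{0}$, and then shows the inclusion by contradiction: assuming a weak cluster point $\left(u_{0},v_{0}\right)\notin S_{0}$, it compares against a reference saddle point $\left(\tilde{u},\tilde{v}\right)\in S_{0}$ and contradicts $m_{k}\rightarrow m_{0}$. You instead pass to the limit directly in the two saddle inequalities, evaluated at test points chosen so that only one coordinate genuinely varies: from $\varphi_{k}\left(u_{0},v_{k}\right)\leq\varphi_{k}\left(u_{k},v_{k}\right)\leq\varphi_{k}\left(u_{k},v\right)$ you get $F_{w_{0}}\left(u_{0},v_{0}\right)\leq F_{w_{0}}\left(u_{0},v\right)$ using weak lower semicontinuity in $v$ on the left and weak upper semicontinuity in $u$ on the right, with the $G$-integrals converging along the sequences by compact embedding into $L^{s}$, strong $L^{p}$ convergence of controls and the growth bounds $(A2)$; the symmetric chain gives the other half. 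One caution: as literally written, your claim that $F_{w_{0}}\left(u_{0},v_{0}\right)\leq\liminf_{k}\varphi_{k}\left(u_{k},v_{k}\right)$ follows from semicontinuity is not tenable, since a concave--convex functional is in general neither weakly lsc nor weakly usc jointly in $\left(u,v\right)$ (take $G=0$, $v_{k}=0$, $\left\Vert u_{k}\right\Vert=1$, $u_{k}\rightharpoonup0$); the inequality is true only because $\left(u_{k},v_{k}\right)$ are saddle points, i.e. it must be routed through $\varphi_{k}\left(u_{0},v_{k}\right)\leq\varphi_{k}\left(u_{k},v_{k}\right)$ --- the exact fix you yourself describe in your final paragraph, so the plan stands. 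What each approach buys: your argument is shorter, needs no uniform convergence and no convergence of the minimax values, and produces a saddle point of $\varphi_{0}$ without presupposing $S_{0}\neq\emptyset$; the paper's uniform-convergence machinery, on the other hand, is infrastructure that is reused verbatim in the subsequent Propositions on strong convergence of saddle points and on weakly convergent controls, which is presumably why the author set it up this way.
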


\begin{proof}
We begin by proving that $\varphi_{k}$ converges uniformly to $\varphi_{0}$ on
$B_{1}\left(  0,r_{1}\right)  \times B_{2}\left(  0,r_{2}\right)  $ where
$B_{1}\left(  0,r_{1}\right)  $, $B_{2}\left(  0,r_{2}\right)  $ are balls
from Theorem \ref{existence} such that the set of all saddle points of
$\varphi_{k}$ denoted by $S_{k}$ is contained in $B_{1}\left(  0,r_{1}\right)
\times B_{2}\left(  0,r_{2}\right)  .$ To do this let $v\in H_{0}^{\alpha
/2}\left(  \Omega\right)  $ be an arbitrary point and suppose that, on the
contrary, the sequence $\left\{  \varphi_{k}\left(  \cdot,v\right)  \right\}$ 
does not converge to $\varphi_{0}\left(  \cdot,v\right)  $
uniformly on $B_{1}\left(  0,r_{1}\right)  .$ Then there exists a
sequence $\left\{  u_{l}\right\}  \subset B_{1}\left(  0,r_{1}\right)  $ and a
positive constant $\varepsilon$ such that
\[
\left\vert \varphi_{k}\left(  u_{l},v\right)  -\varphi_{0}\left(
u_{l},v\right)  \right\vert \geq\varepsilon\text{ for }k\in\mathbb{N}.
\]
Passing to a subsequence if necessary, one can assume that $u_{l}%
\rightharpoonup u_{0}\in B_{1}\left(  0,r_{1}\right)  $ weakly in
$H_{0}^{\alpha/2}\left(  \Omega\right)  .$ 
Using
the triangle inequality, one gets that for $k\in\mathbb{N}$ 
\[\left\vert \varphi_{k}\left(  u_{l},v\right)  -\varphi_{0}\left(
u_{l},v\right)  \right\vert \le \left\vert \varphi_{k}\left(  u_{l},v\right)  -\varphi_{0}\left(
u_{0},v\right)  \right\vert  + \left\vert \varphi_{0}\left(  u_{0},v\right)  -\varphi_{0}\left(
u_{l},v\right)  \right\vert .
\]
%

The lower estimate by $\varepsilon$ leads to the
contradiction with the upper bound as all the above terms tend to zero. To
observe this it is enough to apply Krasnoselskii Theorem \cite[Theorem
2]{IdcRog} on the continuity of the superposition of the operators: %
$
L^{s}\left(  \Omega\right)  \times L^{p}\left(  \Omega,\mathbb{R}^{m}\right)
  \ni\left(  u,w\right)  \mapsto G\left(  \cdot,u\left(  \cdot\right)
,v\left(  \cdot\right)  ,w\left(  \cdot\right)  \right)  \in L^{1}\left(
\Omega\right)$, 
$L^{s}\left(  \Omega\right)    \ni u\mapsto G\left(  \cdot,u\left(
\cdot\right)  ,v\left(  \cdot\right)  ,w\left(  \cdot\right)  \right)  \in
L^{1}\left(  \Omega\right)$, 
since $(A2)$ holds. Next, apply the same arguments to get the uniform
convergence of the sequence $\left\{  \varphi_{k}\left(  u,\cdot\right)
\right\}$ on a ball $B_{2}\left(  0,r_{2}\right)  .$
Therefore, $\varphi_{k}\rightrightarrows\varphi_{0}$ on $B_{1}\left(
0,r_{1}\right)  \times B_{2}\left(  0,r_{2}\right)  .$ Let us denote
\[
m_{k}=\max\limits_{u}\min\limits_{v}\varphi_{k}\left(  u,v\right)
=\max\limits_{u\in B_{1}\left(  0,r_{1}\right)  }\min\limits_{v\in
B_{2}\left(  0,r_{2}\right)  }\varphi_{k}\left(  u,v\right)  \text{ for }%
k\in\mathbb{N}_{0}\text{.}%
\]
Since $\varphi_{k}\rightrightarrows\varphi_{0}$ on $B_{1}\left(
0,r_{1}\right)  \times B_{2}\left(  0,r_{2}\right)  $, for any $\varepsilon
>0,$ there exists $K_{0}$ such that%
\[
\varphi_{k}\left(  u,v\right)  \leq\varphi_{0}\left(  u,v\right)  +\varepsilon
\]
for any $\left(  u,v\right)  \in B_{1}\left(  0,r_{1}\right)  \times
B_{2}\left(  0,r_{2}\right)  $ and $k>K_{0}.$ This implies that%
\[
\min\limits_{v\in B_{2}\left(  0,r_{2}\right)  }\varphi_{k}\left(  u,v\right)
\leq\min\limits_{v\in B_{2}\left(  0,r_{2}\right)  }\varphi_{0}\left(
u,v\right)  +\varepsilon
\]
for any $u\in B_{2}\left(  0,r_{2}\right)  $ and $k>K_{0}.$ Consequently,
\[
\max\limits_{u\in B_{1}\left(  0,r_{1}\right)  }\min\limits_{v\in B_{2}\left(
0,r_{2}\right)  }\varphi_{k}\left(  u,v\right)  \leq\max\limits_{u\in
B_{1}\left(  0,r_{1}\right)  }\min\limits_{v\in B_{2}\left(  0,r_{2}\right)
}\varphi_{0}\left(  u,v\right)  +\varepsilon
\]
for $k>K_{0}.$ Thus $m_{k}-m_{0}\leq\varepsilon$ for sufficiently large $k.$
In a similar way it is possible to show that $-\varepsilon\leq m_{k}-m_{0}$
for sufficiently large $k.$ In this way we have proved that $m_{k}$ tends to
$m_{0}$ as $k\rightarrow\infty.$

Next, let $\left\{  \left(  u_{k},v_{k}\right)  \right\}$
be an arbitrary sequence of saddle points, such that $\left(  u_{k}%
,v_{k}\right)  \in S_{k}$ for $k\in\mathbb{N}.$ From Theorem \ref{existence},
for any $k\in\mathbb{N}$, the set $S_{k}$ is nonempty and there exist
$r_{1}>0$ and $r_{2}>0$ such that $S_{k}\subset B_{1}\left(  0,r_{1}\right)
\times B_{2}\left(  0,r_{2}\right)  $ for every $k$, i.e. the sequence
$\left\{  \left(  u_{k},v_{k}\right)  \right\}$ is bounded.
Moreover, the space $\mathbb{H}_{0}^{\alpha/2}$ is reflexive, which implies
that the sequence $\left\{  \left(  u_{k},v_{k}\right)  \right\}$ 
is weakly compact, therefore the set of its cluster points
with respect of weak topology of $\mathbb{H}_{0}^{\alpha/2}$ is nonempty. This
means that $\left(  w\right)  \mathrm{Lim}\sup S_{k}\neq\emptyset.$
Let $\left(  u_{0},v_{0}\right)  \in B_{1}\left(  0,r_{1}\right)  \times
B_{2}\left(  0,r_{2}\right)  $ be any cluster point of the sequence $\left\{
\left(  u_{k},v_{k}\right)  \right\} .$ Going, if necessary,
to a subsequence, we may assume that $\left\{  \left(  u_{k},v_{k}\right)
\right\} $ tends to $\left(  u_{0},v_{0}\right)  $ weakly in
$\mathbb{H}_{0}^{\alpha/2}.$ We shall show that $\left(  u_{0},v_{0}\right)
\in S_{0}.$ Suppose on the contrary that $\left(  u_{0},v_{0}\right)  $ does
not belong to $S_{0}.$ Let $\left(  \tilde{u},\tilde{v}\right)  $ be an
element of $S_{0}.$ So, we have $\varphi_{0}\left(  u_{0},v_{0}\right)
\neq\varphi_{0}\left(  \tilde{u},\tilde{v}\right)  .$ First, consider the case
when $\varphi_{0}\left(  \tilde{u},\tilde{v}\right)  -\varphi_{0}\left(
u_{0},v_{0}\right)  =\lambda<0.$ In that case we have
\begin{align*}
&m_{k}-m_{0}    =\varphi_{k}\left(  u_{k},v_{k}\right)  -\varphi_{0}\left(
u_{0},v_{0}\right)  \leq\varphi_{k}\left(  u_{k},\tilde{v}\right)
-\varphi_{0}\left(  u_{0},v_{0}\right) \\
&  =\left(  \varphi_{k}\left(  u_{k},\tilde{v}\right)  -\varphi_{0}\left(
u_{k},\tilde{v}\right)  \right)  +\left(  \varphi_{0}\left(  u_{k},\tilde
{v}\right)  -\varphi_{0}\left(  \tilde{u},\tilde{v}\right)  \right) 
  +\left(  \varphi_{0}\left(  \tilde{u},\tilde{v}\right)  -\varphi_{0}\left(
u_{0},v_{0}\right)  \right)  .
\end{align*}
From uniform convergence of $\varphi_{k}$ to $\varphi_{0}$ on $B_{1}\left(
0,r_{1}\right)  \times B_{2}\left(  0,r_{2}\right)  $ and the weak upper
semicontinuity of $\varphi_{0}\left(  \cdot,v\right)  $ we have
$
\lim\limits_{k\rightarrow\infty}\left[  \varphi_{k}\left(  u_{k},\tilde
{v}\right)  -\varphi_{0}\left(  u_{k},\tilde{v}\right)  \right]     =0 $ and 
$\limsup\limits_{k\rightarrow\infty}\left[  \varphi_{0}\left(  u_{k},\tilde
{v}\right)  -\varphi_{0}\left(  \tilde{u},\tilde{v}\right)  \right]   
\leq0.$
This implies that $\limsup\nolimits_{k\rightarrow\infty}\left(  m_{k}%
-m_{0}\right)  \leq\lambda<0.$ We have thus got a contradiction with the
previously proved fact that $m_{k}\rightarrow m_{0}$ as $k\rightarrow\infty$.
Similarly, we obtain a contradiction in the case when $\lambda>0.$ Therefore,
$\left(  u_{0},v_{0}\right)  \in S_{0}$, $\left(  w\right)  \mathrm{Lim}%
\sup S_{k}\subset S_{0}$ in $\mathbb{H}_{0}^{\alpha/2},$ which ends the proof. \qed
\end{proof}

\begin{proposition}
\label{strong_strong}If conditions $(A1)-(A5)$ are satisfied and $w_{k}$ tends
to $w_{0}$ in $L^{p}\left(  \Omega,\mathbb{R}^{m}\right)  $, then $\left(
s\right)  \mathrm{Lim}\sup S_{k}$ $\neq\emptyset$ and $\left(  s\right)
\mathrm{Lim}\sup S_{k}\subset S_{0}$ in $\mathbb{H}_{0}^{\alpha/2}.$
\end{proposition}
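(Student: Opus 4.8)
The plan is to reduce everything to the already-established weak result, Proposition~\ref{strong_weak}, and then upgrade weak convergence of saddle points to strong convergence by exploiting the Euler--Lagrange characterization of saddle points as weak solutions (Remark~\ref{coincidence}). Observe first that the inclusion $\left(  s\right)  \mathrm{Lim}\sup S_{k}\subset S_{0}$ is essentially free: every \emph{strong} cluster point of a sequence $\left(  u_{k},v_{k}\right)  \in S_{k}$ is in particular a \emph{weak} cluster point, so $\left(  s\right)  \mathrm{Lim}\sup S_{k}\subset\left(  w\right)  \mathrm{Lim}\sup S_{k}$, and the right-hand side is contained in $S_{0}$ by Proposition~\ref{strong_weak}. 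Hence the whole content of the proposition is the nonemptiness assertion $\left(  s\right)  \mathrm{Lim}\sup S_{k}\neq\emptyset$, i.e. the existence of a sequence of saddle points admitting a \emph{strongly} convergent subsequence.

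To produce such a subsequence I would take any $\left(  u_{k},v_{k}\right)  \in S_{k}$. By Theorem~\ref{existence} the sequence lies in $B_{1}\left(  0,r_{1}\right)  \times B_{2}\left(  0,r_{2}\right)  $, hence is bounded in the reflexive space $\mathbb{H}_{0}^{\alpha/2}$, so after passing to a subsequence I may assume $\left(  u_{k},v_{k}\right)  \rightharpoonup\left(  u_{0},v_{0}\right)  $ weakly, with $\left(  u_{0},v_{0}\right)  \in S_{0}$ by Proposition~\ref{strong_weak}. Since $s\in\left(  1,2_{\alpha}^{\ast}\right)  $ is strictly subcritical, the embedding $H_{0}^{\alpha/2}\left(  \Omega\right)  \hookrightarrow L^{s}\left(  \Omega\right)  $ is compact, so $u_{k}\rightarrow u_{0}$ and $v_{k}\rightarrow v_{0}$ strongly in $L^{s}\left(  \Omega\right)  $.

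The crucial step is to promote this to strong convergence in $H_{0}^{\alpha/2}$ via the weak formulation. By $(A5)$ and Remark~\ref{coincidence}, both $\left(  u_{k},v_{k}\right)  $ and $\left(  u_{0},v_{0}\right)  $ are weak solutions of $(\ref{1.1})$ for the controls $w_{k}$ and $w_{0}$ respectively. Testing the $u$-equation for $\left(  u_{k},v_{k},w_{k}\right)  $ and for $\left(  u_{0},v_{0},w_{0}\right)  $ with the common test function $g=u_{k}-u_{0}$ and subtracting, the Dirichlet quadratic forms combine into a single square and yield
\[
\left\Vert u_{k}-u_{0}\right\Vert _{H_{0}^{\alpha/2}}^{2}=\int_{\Omega}\left[  G_{u}\left(  x,u_{k},v_{k},w_{k}\right)  -G_{u}\left(  x,u_{0},v_{0},w_{0}\right)  \right]  \left(  u_{k}-u_{0}\right)  dx,
\]
and an analogous identity (with the opposite sign on the nonlinear term) holds for $v$ upon testing the second equation with $h=v_{k}-v_{0}$. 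It remains to show that the right-hand side tends to $0$. The growth condition $(A2)$, together with the conjugate-exponent identities $\left(  s-1\right)  \tfrac{s}{s-1}=s$ and $\left(  p-\tfrac{p}{s}\right)  \tfrac{s}{s-1}=p$, shows that both $G_{u}\left(  \cdot,u_{k},v_{k},w_{k}\right)  $ and the fixed function $G_{u}\left(  \cdot,u_{0},v_{0},w_{0}\right)  $ are bounded in $L^{s'}\left(  \Omega\right)  $ with $s'=s/\left(  s-1\right)  $; pairing against $u_{k}-u_{0}\rightarrow0$ in $L^{s}\left(  \Omega\right)  $ via H\"older's inequality forces the integral to vanish (one may equally invoke the Krasnoselskii superposition continuity used in Proposition~\ref{strong_weak}, since $w_{k}\rightarrow w_{0}$ in $L^{p}$). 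Thus $u_{k}\rightarrow u_{0}$ and, by the same argument, $v_{k}\rightarrow v_{0}$ strongly in $H_{0}^{\alpha/2}\left(  \Omega\right)  $, so $\left(  u_{0},v_{0}\right)  $ is a strong cluster point and $\left(  s\right)  \mathrm{Lim}\sup S_{k}\neq\emptyset$.

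The main obstacle is precisely this third step: weak convergence alone gives no control on the homogeneous norms $\left\Vert (-\Delta)^{\alpha/4}\left(  u_{k}-u_{0}\right)  \right\Vert _{L^{2}}$, so one must first extract those squared norms from the \emph{difference} of the two weak-solution identities and then annihilate the remaining nonlinear term. What makes it work is strict subcriticality of $s$, which simultaneously supplies compactness of the embedding into $L^{s}$ (so $u_{k}-u_{0}\rightarrow0$ strongly there) and places the nonlinearities in the dual $L^{s'}$, rendering the nonlinear remainder a product of a strongly-null factor with a bounded one.
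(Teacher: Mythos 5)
Your proof is correct, but it takes a genuinely different route from the paper's. The paper spends roughly half of its proof establishing, via a contradiction argument and Krasnoselskii's theorem on superposition operators, the uniform convergence $\varphi_{k}^{\prime}\rightrightarrows\varphi_{0}^{\prime}$ on $B_{1}\left(  0,r_{1}\right)  \times B_{2}\left(  0,r_{2}\right)  $; it then works entirely with the \emph{limit} functional $\varphi_{0}$, using the identity
\[
\left\langle \varphi_{0}^{\prime}\left(  u_{k},v_{k}\right)  -\varphi
_{0}^{\prime}\left(  u_{0},v_{0}\right)  ,\left(  u_{0}-u_{k},v_{k}
-v_{0}\right)  \right\rangle =\left\Vert u_{k}-u_{0}\right\Vert _{H_{0}
^{\alpha/2}}^{2}+\left\Vert v_{k}-v_{0}\right\Vert _{H_{0}^{\alpha/2}}
^{2}+R_{k},
\]
where $R_{k}$ collects nonlinear terms evaluated at the \emph{fixed} control $w_{0}$, and the left-hand side tends to zero precisely because $\varphi_{k}^{\prime}\left(  u_{k},v_{k}\right)  =0$ and the derivatives converge uniformly --- that is where the convergence $w_{k}\rightarrow w_{0}$ enters. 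You bypass the uniform convergence of derivatives altogether: appealing to Remark \ref{coincidence}, you subtract the two Euler--Lagrange (weak-solution) identities taken with their \emph{own} controls $w_{k}$ and $w_{0}$, so the squared norms appear immediately, and the control enters only through the boundedness of $\left\{  w_{k}\right\}  $ in $L^{p}$, via the conjugate-exponent computation $\left(  p-\tfrac{p}{s}\right)  \tfrac{s}{s-1}=p$ which places $G_{u}\left(  \cdot,u_{k},v_{k},w_{k}\right)  $ in a bounded subset of $L^{s/\left(  s-1\right)  }$. Both proofs then annihilate the nonlinear remainder in the same way (H\"{o}lder, growth condition $(A2)$, compact embedding into $L^{s}$), and both obtain the limit's membership in $S_{0}$ and the inclusion $\left(  s\right)  \mathrm{Lim}\sup S_{k}\subset\left(  w\right)  \mathrm{Lim}\sup S_{k}\subset S_{0}$ from Proposition \ref{strong_weak}. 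What the paper's longer route buys is reusability: the uniform convergence of $\varphi_{k}^{\prime}$ is exactly the lemma that gets re-proved under weaker hypotheses in Propositions \ref{weak_strong} and \ref{weak*_storng}, so the strong-convergence skeleton transfers verbatim to weakly and weak-$\ast$ convergent controls. Your route is shorter and more elementary for the proposition at hand, and in the upgrade step it actually uses less than the hypothesis provides (boundedness of the controls rather than their convergence), the convergence being consumed only through the cited weak result.
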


\begin{proof}
We start with a proof of the uniform convergence of $\varphi_{k}^{\prime}$ to
$\varphi_{0}^{\prime}$ on $B_{1}\left(  0,r_{1}\right)  \times B_{2}\left(
0,r_{2}\right)  $ where as before $B_{1}\left(  0,r_{1}\right)  $,
$B_{2}\left(  0,r_{2}\right)  $ are balls from Theorem \ref{existence} such
that for all $w\in\mathcal{W},$ the set of all saddle points of $\varphi_{k}$
denoted by $S_{k}$ is a subset of $B_{1}\left(  0,r_{1}\right)  \times
B_{2}\left(  0,r_{2}\right)  .$\newline Let $v\in H_{0}^{\alpha/2}\left(
\Omega\right)  $ be an arbitrary point. First, suppose that the sequence
$\left\{\ \frac{\partial\varphi_{k}}{\partial u}\left(  \cdot,v\right)\right\}
$ does not converge to $\frac{\partial\varphi_{0}%
}{\partial u}\left(  \cdot,v\right)  $ uniformly on $B_{1}\left(
0,r_{1}\right)  .$ This means that there exists a sequence $\left\{
u_{l}\right\}  \subset B_{1}\left(  0,r_{1}\right)  $ and a positive constant
$\varepsilon$ such that
\[
\left\vert \left\langle \tfrac{\partial\varphi_{k}}{\partial u}\left(
u_{l},v\right)  -\tfrac{\partial\varphi_{0}}{\partial u}\left(  u_{l}%
,v\right)  ,g_{l}\right\rangle \right\vert \geq\varepsilon\text{ for }%
k\in\mathbb{N}%
\]
and $\left\{  g_{l}\right\}  \subset B_{1}\left(  0,r_{1}\right)  .$ Passing
to a subsequence if necessary, assume that $u_{l}\rightharpoonup u_{0}\in
B_{1}\left(  0,r_{1}\right)  .$ Clearly  
$\left\vert \left\langle \tfrac{\partial\varphi_{k}}{\partial u}\left(
u_{l},v\right)  -\tfrac{\partial\varphi_{0}}{\partial u}\left(  u_{l}%
,v\right)  ,g_{l}\right\rangle \right\vert $ can be estimated by
\[
\left\vert \left\langle \tfrac{\partial\varphi_{k}}{\partial u}\left(
u_{l},v\right)  -\tfrac{\partial\varphi_{0}}{\partial u}\left(  u_{0}%
,v\right)  ,g_{l}\right\rangle +
 \left\langle \tfrac{\partial\varphi_{0}}{\partial u}\left(
u_{0},v\right)  -\tfrac{\partial\varphi_{0}}{\partial u}\left(  u_{l}%
,v\right)  ,g_{l}\right\rangle \right\vert .
\]
%
The above terms tend to zero. This is an immediate
consequence of Krasnoselskii Theorem \cite[Theorem 2]{IdcRog} on the
continuity of the superposition of operators
$
L^{s}\left(  \Omega\right)  \times L^{p}\left(  \Omega,\mathbb{R}^{m}\right)
  \ni\left(  u,w\right)  \mapsto G_{u}\left(  \cdot,u\left(  \cdot\right)
,v\left(  \cdot\right)  ,w\left(  \cdot\right)  \right)  \in L^{\frac{s}{s-1}%
}\left(  \Omega\right) $ and
$L^{s}\left(  \Omega\right)     \ni u\mapsto G_{u}\left(  \cdot,u\left(
\cdot\right)  ,v\left(  \cdot\right)  ,w\left(  \cdot\right)  \right)  \in
L^{\frac{s}{s-1}}\left(  \Omega\right)$
by $(A2)$ and using the fact that the sequence $\left\{  g_{l}\right\}  $ is
bounded$.$ Next, in similar fashion, the uniform convergence of the sequence
$\left\{  \frac{\partial\varphi_{k}}{\partial u}\left(  u,\cdot\right)
\right\} $ on a ball $B_{2}\left(  0,r_{2}\right)  $ can be
easily verified. As a result, $\varphi_{k}^{\prime}\rightrightarrows
\varphi_{0}^{\prime}$ on $B_{1}\left(  0,r_{1}\right)  \times B_{2}\left(
0,r_{2}\right)  .$ Let $\left\{  \left(  u_{k},v_{k}\right)  \right\}  \subset\mathbb{H}%
_{0}^{\alpha/2}$ be a sequence such that $\left(  u_{k},v_{k}\right)  \in
S_{k}$ for $k\in\mathbb{N}$. Since, for any $k\in\mathbb{N}$, $S_{k}\subset
B_{1}\left(  0,r_{1}\right)  \times B_{2}\left(  0,r_{2}\right)  $, for some
$r_{1},r_{2}>0$ (cf. Theorem \ref{existence}), we may assume, without loss of
generality, that $\left(  u_{k},v_{k}\right)  $ converges weakly to some
$\left(  u_{0},v_{0}\right)  \in B_{1}\left(  0,r_{1}\right)  \times
B_{2}\left(  0,r_{2}\right)  $ in $\mathbb{H}_{0}^{\alpha/2}$. Our aim is now
to show that $\left(  u_{k},v_{k}\right)  \rightarrow\left(  u_{0}%
,v_{0}\right)  $ strongly in $\mathbb{H}_{0}^{\alpha/2}$. Actually, by direct
calculations we get
\begin{align*}
&  \left\langle \varphi_{0}^{\prime}\left(  u_{k},v_{k}\right)  -\varphi
_{0}^{\prime}\left(  u_{0},v_{0}\right)  ,\left(  u_{0}-u_{k},v_{k}%
-v_{0}\right)  \right\rangle   =\left\Vert u_{k}-u_{0}\right\Vert _{H_{0}^{\alpha/2}}^{2}+\left\Vert
v_{k}-v_{0}\right\Vert _{H_{0}^{\alpha/2}}^{2}\\
&  +\int\nolimits_{\Omega}\left(  G_{u}\left(  x,u_{k}\left(  x\right)
,v_{k}\left(  x\right)  ,w_{0}\left(  x\right)  \right)  -G_{u}\left(
x,u_{0}\left(  x\right)  ,v_{0}\left(  x\right)  ,w_{0}\left(  x\right)
\right)  \right)  \left(  u_{0}\left(  x\right)  -u_{k}\left(  x\right)
\right)  dx\\
&  +\int\nolimits_{\Omega}\left(  G_{v}\left(  x,u_{k}\left(  x\right)
,v_{k}\left(  x\right)  ,w_{0}\left(  x\right)  \right)  -G_{v}\left(
x,u_{0}\left(  x\right)  ,v_{0}\left(  x\right)  ,w_{0}\left(  x\right)
\right)  \right)  \left(  v_{k}\left(  x\right)  -v_{0}\left(  x\right)
\right)  dx.
\end{align*}
Since $\varphi_{k}^{\prime}\rightrightarrows\varphi_{0}^{\prime}$ on
$B_{1}\left(  0,r_{1}\right)  \times B_{2}\left(  0,r_{2}\right)  $,
$\varphi_{0}^{\prime}\left(  u_{k},v_{k}\right)  \rightarrow0$ and therefore
the left side of the above equality tends to $0.$ We shall show that the last
two integrals above tend to zero. The condition $\left(  A2\right)  $ and the
H\"{o}lder inequality lead to the estimates:%
\begin{align*}
&  \left\vert \int\nolimits_{\Omega}\left(  G_{u}\left(  x,u_{k}\left(
x\right)  ,v_{k}\left(  x\right)  ,w_{0}\left(  x\right)  \right)
-G_{u}\left(  x,u_{0}\left(  x\right)  ,v_{0}\left(  x\right)  ,w_{0}\left(
x\right)  \right)  \right)  (u_{0}\left(  x\right)  -u_{k}\left(  x\right)
)dx\right\vert \\
&  \leq\left(  \int\nolimits_{\Omega}\left\vert G_{u}\left(  x,u_{k}\left(
x\right)  ,v_{k}\left(  x\right)  ,w_{0}\left(  x\right)  \right)
-G_{u}\left(  x,u_{0}\left(  x\right)  ,v_{0}\left(  x\right)  ,w_{0}\left(
x\right)  \right)  \right\vert ^{\frac{s}{s-1}}dx\right)  ^{\frac{s-1}{s}%
} \Vert u_0-u_k \Vert_{L^s}\\
&  \left\vert \int\nolimits_{\Omega}\left(  G_{v}\left(  x,u_{k}\left(
x\right)  ,v_{k}\left(  x\right)  ,w_{0}\left(  x\right)  \right)
-G_{v}\left(  x,u_{0}\left(  x\right)  ,v_{0}\left(  x\right)  ,w_{0}\left(
x\right)  \right)  \right)  \left(  v_{k}\left(  x\right)  -v_{0}\left(
x\right)  \right)  dx\right\vert \\
&  \leq\left(  \int\nolimits_{\Omega}\left\vert G_{v}\left(  x,u_{k}\left(
x\right)  ,v_{k}\left(  x\right)  ,w_{0}\left(  x\right)  \right)
-G_{v}\left(  x,u_{0}\left(  x\right)  ,v_{0}\left(  x\right)  ,w_{0}\left(
x\right)  \right)  \right\vert ^{\frac{s}{s-1}}dx\right)  ^{\frac{s-1}{s}%
}
\Vert v_0-v_k \Vert_{L^s}.
\end{align*}
Since $H_{0}^{\alpha/2}\left(  \Omega\right)  $ is compactly embedded into
$L^{s}\left(  \Omega\right)  $ for $s\in\left(  1,2_{\alpha}^{\ast}\right)  $
if $n>2$ and since both first integrals in the above estimates are bounded, it
follows that $\left(  u_{k},v_{k}\right)  \rightarrow\left(  u_{0}%
,v_{0}\right)  \in S_{0}$ in the strong topology of $\mathbb{H}_{0}^{\alpha
/2}$, i.e. $\left(  s\right)  \mathrm{Lim}\sup S_{k}$ $\neq\emptyset.$
Obviously, $\left(  s\right)  \mathrm{Lim}\sup S_{k}\subset S_{0}$ in
$\mathbb{H}_{0}^{\alpha/2},$ which is a direct consequence of $\left(
w\right)  \mathrm{Lim}\sup S_{k}\subset S_{0}$ in $\mathbb{H}_{0}^{\alpha/2}$
as proved in Proposition \ref{weak_strong} and the inclusion $\left(
s\right)  \mathrm{Lim}\sup S_{k}\subset\left(  w\right)  \mathrm{Lim}\sup
S_{k}.$ This concludes the proof.\qed
\end{proof}



To achieve stronger results which are useful in optimization theory, it is
necessary to weaken the notion of the convergence of controls. As a side
effect we should therefore narrow the class of equations under considerations.
Namely, in this section, we shall assume that the integrand $G$ is linear with
respect to control $w$, i.e. the function $G$ will take the form%
\begin{equation}
G\left(  x,u,v,w\right)  =G^{1}\left(  x,u,v\right)  +G^{2}\left(
x,u,v\right)  w \label{41}%
\end{equation}
where $G^{1}:\Omega\times\mathbb{R}^{2}\rightarrow\mathbb{R}$, $G^{2}%
:\Omega\times\mathbb{R}^{2}\rightarrow\mathbb{R}^{m}$, $w\in\mathbb{R}^{m}.$

Obviously, 
the functional of action $F_{w}\left(  u,v\right)$ now assumes the form%
\[
\int\nolimits_{\Omega}\left(  \tfrac{1}{2}\left\vert
(-\Delta)^{\alpha/4}v\left(  x\right)  \right\vert ^{2}-\tfrac{1}{2}\left\vert
(-\Delta)^{\alpha/4}u\left(  x\right)  \right\vert ^{2}+
\sum_{i=1}^2 G^{i}\left(
x,u\left(  x\right)  ,v\left(  x\right)  \right) \left(w\left(  x\right)\right)^{i-1}  \right) dx
\]
where $u\in H_{0}^{\alpha/2}\left(  \Omega\right)  $, $v\in H_{0}^{\alpha
/2}\left(  \Omega\right)  $, $w\in L^{p}\left(  \Omega,\mathbb{R}^{m}\right)
\,$with $p>1.$ Assume

\begin{enumerate}
\item[(A1')] the functions $G^{1},G_{u}^{1},G_{v}^{1},G^{2},G_{u}^{2}%
,G_{v}^{2}$ are measurable with respect to $x$ for any $\left(  u,v\right)
\in\mathbb{R}^{2}$ and continuous with respect to $\left(  u,v\right)  $ for
a.e. $x\in\Omega$;

\item[(A2')] for $p\in\left(  1,\infty\right)  ,$ there exists 
$c>0$ such that for any $z\in\{u,v\}$%
\begin{align*}
\left\vert G_{z}^{1}\left(  x,u,v\right)  \right\vert   & \leq c\left(
1+\left\vert u\right\vert ^{s-1}+\left\vert v\right\vert ^{s-1}\right) \\
\left\vert G_{z}^{2}\left(  x,u,v\right)  \right\vert  &  \leq c\left(
1+\left\vert u\right\vert ^{s-1-\frac{s}{p}}+\left\vert u\right\vert
^{s-1-\frac{s}{p}}\right) 
\end{align*}
for $x\in\Omega$ a.e., $u\in\mathbb{R}$, $v\in\mathbb{R}$ and $s\in\left(
1+\frac{1}{p-1},2_{\alpha}^{\ast}\right)  $ where $2_{\alpha}^{\ast}=\frac
{2n}{n-\alpha}>2$ and $p>\frac{2n}{n+\alpha}$; for $p=\infty,$ there
exist $c>0$ such that for any $z\in \{u,v\}$, $i\in\{1,2\}$%
\begin{equation*}
\left\vert G_{z}^{i}\left(  x,u,v\right)  \right\vert    \leq c\left(
1+\left\vert u\right\vert ^{s-1}+\left\vert v\right\vert ^{s-1}\right) \\
\end{equation*}
for $x\in\Omega$ a.e., $u\in\mathbb{R}$, $v\in\mathbb{R}$ and $s\in\left(
1,2_{\alpha}^{\ast}\right)  .$
\end{enumerate}

Obviously, assumptions $\left(  A1^{\prime}\right)  $, $\left(  A2^{\prime
}\right)  $ imply that the function $G$ satisfies $\left(  A1\right)  $ and
$\left(  A2\right)  .$ Moreover, we shall suppose that the function $G$ given
by $\left(  \ref{41}\right)  $ meets conditions $(A3),\left(  A4\right)
,\left(  A5\right)  .$ For this more specific form of the problem, the claim
of the theorem on the existence and the continuous dependence can be
strengthened. To draw the same conclusion this time, it suffices to assume
only the weak convergence of controls. Let $\left\{  w_{k}\right\}  _{k\in\mathbb{N}}$ 
be a sequence of controls. We shall prove:

\begin{proposition}
\label{weak_strong}Suppose that the function $G$ is of the form $\left(
\ref{41}\right)  $ and satisfies conditions $\left(  A1^{\prime}\right)
,\left(  A2^{\prime}\right)  ,\left(  A3\right)  ,$ $\left(  A4\right)
,\left(  A5\right)  .$ Moreover, the sequence of controls $w_{k}$ converges to
$w_{0}$ in the weak topology of $L^{p}\left(  \Omega,\mathbb{R}^{m}\right)  $
for $p\in\left(  2n/\left(n+\alpha \right),\infty\right)  .$ Then $\left(  s\right)
\mathrm{Lim}\sup S_{k}$ $\neq\emptyset$ and $\left(  s\right)  \mathrm{Lim}%
\sup S_{k}\subset S_{0}$ in $\mathbb{H}_{0}^{\alpha/2}.$
\end{proposition}

\begin{proof}
The proof is similar in spirit to that of Propositions \ref{strong_weak} and
\ref{strong_strong}. Although this proof runs along similar lines, there is
need of some subtle adjustments required to fit the arguments to new
framework. In fact, to prove that $\left(  w\right)  \mathrm{Lim}\sup S_{k}$
$\neq\emptyset$ and $\left(  w\right)  \mathrm{Lim}\sup S_{k}\subset S_{0}$ in
$\mathbb{H}_{0}^{\alpha/2}$ we proceed along the same lines as in the proof of
Proposition \ref{strong_weak}. The only thing to check now is the uniform
convergence of $\varphi_{k}$ to $\varphi_{0}$ on $B_{1}\left(  0,r_{1}\right)
\times B_{2}\left(  0,r_{2}\right)  .$\newline Let $v\in H_{0}^{\alpha
/2}\left(  \Omega\right)  $ be an arbitrary point. Suppose, to derive a
contradiction, that the sequence $\left\{  \varphi_{k}\left(  \cdot,v\right)
\right\}$ does not converge to $\varphi_{0}\left(
\cdot,v\right)  $ uniformly on $B_{1}\left(  0,r_{1}\right)  .$ This means
that there exist a sequence $\left\{  u_{l}\right\}  \subset B_{1}\left(
0,r_{1}\right)  $ and a positive constant $\varepsilon$ such that
$
\left\vert \varphi_{k}\left(  u_{l},v\right)  -\varphi_{0}\left(
u_{l},v\right)  \right\vert \geq\varepsilon\text{ for }k\in\mathbb{N}.
$
Passing, if necessary, to a subsequence let us assume that $u_{l}
\rightharpoonup u_{0}\in B_{1}\left(  0,r_{1}\right).$ By direct
calculations, we get
\begin{align*}
&\left\vert \varphi_{k}\left(  u_{l},v\right)  -\varphi_{0}\left(
u_{l},v\right)  \right\vert  \leq \int_{\Omega}\left\vert G^{2}\left(  x,u_{0}\left(  x\right)  ,v\left(
x\right)  \right)  \left(  w_{k}\left(  x\right)  -w_{0}\left(  x\right)
\right)  \right\vert dx \\
& +\left(  \int_{\Omega}\left\vert G^{2}\left(  x,u_{l}\left(  x\right)
,v\left(  x\right)  \right)  -G^{2}\left(  x,u_{0}\left(  x\right)  ,v\left(
x\right)  \right)  \right\vert ^{\frac{p}{p-1}}dx\right)  ^{\frac{p-1}{p}
}\Vert w_k-w_0\Vert_{L^p}
\end{align*}
for $k\in\mathbb{N}.$ Now we end up with a contradiction with 
supposition since the above integrals tend to zero. To observe this
convergence one can invoke \cite[Theorem 2]{IdcRog} to get the continuity of
the mapping
$
L^{s}\left(  \Omega\right)  \times L^{s}\left(  \Omega\right)  \ni\left(
u,v\right)  \mapsto G^{2}\left(  \cdot,u\left(  \cdot\right)  ,v\left(
\cdot\right)  \right)  \in L^{\frac{p}{p-1}}\left(  \Omega\right)
$
and use the assumption $\left(  A2^{\prime}\right)  $ together with the weak
convergence of controls in $L^{p}\left(  \Omega,\mathbb{R}^{m}\right)  .$ The
same arguments apply to the uniform convergence of the sequence $\left\{
\varphi_{k}\left(  u,\cdot\right)  \right\} $ on a ball
$B_{2}\left(  0,r_{2}\right)  .$ In that way one can demonstrate that
$\varphi_{k}\rightrightarrows\varphi_{0}$ on $B_{1}\left(  0,r_{1}\right)
\times B_{2}\left(  0,r_{2}\right)  .$ To prove that $\left(  s\right)  \mathrm{Lim}\sup S_{k}$ $\neq\emptyset$ and
$\left(  s\right)  \mathrm{Lim}\sup S_{k}\subset S_{0}$ in $\mathbb{H}%
_{0}^{\alpha/2}$ we proceed in the exactly same way as in the proof of
Proposition \ref{strong_strong}. We need to show that $\varphi_{k}^{\prime}$
converges uniformly to $\varphi_{0}^{\prime}$ on $B_{1}\left(  0,r_{1}\right)
\times B_{2}\left(  0,r_{2}\right)  .$ Let $v\in H_{0}^{\alpha/2}\left(  \Omega\right)  $ be an arbitrary point. In a
contradiction with the claim, suppose that the sequence $\left\{
\frac{\partial\varphi_{k}}{\partial u}\left(  \cdot,v\right)  \right\}$ 
does not converge to $\frac{\partial\varphi_{0}}{\partial
u}\left(  \cdot,v\right)  $ uniformly on $B_{1}\left(  0,r_{1}\right)  .$ This
means again that there exist a sequence $\left\{  u_{l}\right\}  \subset
B_{1}\left(  0,r_{1}\right)  $ and a positive constant $\varepsilon$ such
that
\[
\left\vert \left\langle \tfrac{\partial\varphi_{k}}{\partial u}\left(
u_{l},v\right)  -\tfrac{\partial\varphi_{0}}{\partial u}\left(  u_{l}%
,v\right)  ,g_{l}\right\rangle \right\vert \geq\varepsilon\text{ for }%
k\in\mathbb{N}%
\]
and $\left\{  g_{l}\right\}  \subset B_{1}\left(  0,r_{1}\right)  .$ Passing
to a subsequence one can assume that $u_{l}\rightharpoonup u_{0}\in
B_{1}\left(  0,r_{1}\right)  .$ It can be easily verified that for $k\in\mathbb{N}$%
\begin{align*}
&  \left\vert \left\langle \tfrac{\partial\varphi_{k}}{\partial u}\left(
u_{l},v\right)  -\tfrac{\partial\varphi_{0}}{\partial u}\left(  u_{l}%
,v\right)  ,g_{l}\right\rangle \right\vert \leq \int_{\Omega}\left\vert G_{u}^{2}\left(  x,u_{0}\left(  x\right)
,v\left(  x\right)  \right)  \left(  w_{k}\left(  x\right)  -w_{0}\left(
x\right)  \right)  \right\vert \left\vert g_{l}\left(  x\right)  \right\vert
dx \\
&  \leq\left(  \int_{\Omega}\left\vert G_{u}^{2}\left(  x,u_{l}\left(
x\right)  ,v\left(  x\right)  \right)  -G_{u}^{2}\left(  x,u_{0}\left(
x\right)  ,v\left(  x\right)  \right)  \right\vert ^{\frac{ps}{p\left(
s-1\right)  -s}}\right)  ^{\frac{p\left(  s-1\right)  -s}{ps}}\left\Vert
w_{k}-w_{0}\right\Vert _{L^{p}}\left\Vert g_{l}\right\Vert _{L^{s}}. 
\end{align*}
The only thing to check is the convergence to zero of
the above integrals. The assumption $\left(  A2^{\prime}\right)  $,
boundedness of the sequences $\left\{  g_{l}\right\}  $, $\left\{  \left\Vert
w_{k}\right\Vert _{L^{p}}\right\}  $ as well as continuity of the operators
$L^{s}\left(  \Omega\right)     \ni u\mapsto G_{u}^{2}\left(  \cdot,u\left(
\cdot\right)  ,v\left(  \cdot\right)  \right)  \in L^{\frac{ps}{p\left(
s-1\right)  -s}}\left(  \Omega,\mathbb{R}^{m}\right)$, 
$L^{s}\left(  \Omega\right)  \times L^{p}\left(  \Omega,\mathbb{R}^{m}\right)
  \ni\left(  u,w\right)  \mapsto G_{u}^{2}\left(  \cdot,u\left(
\cdot\right)  ,v\left(  \cdot\right)  \right)  w\left(  \cdot\right)  \in
L^{\frac{s}{s-1}}\left(  \Omega\right)$,
make it possible to draw the desired conclusion. Likewise, one can show the
uniform convergence of the sequence $\left\{  \frac{\partial\varphi_{k}%
}{\partial u}\left(  u,\cdot\right)  \right\}$ on a ball
$B_{2}\left(  0,r_{2}\right)  .$ Therefore, $\varphi_{k}^{\prime
}\rightrightarrows\varphi_{0}^{\prime}$ on $B_{1}\left(  0,r_{1}\right)
\times B_{2}\left(  0,r_{2}\right)  .$ The rest follows as the
proofs of Propositions \ref{strong_weak} and \ref{strong_strong}.\qed
\end{proof}

\begin{proposition}
\label{weak*_storng}Assume that $G$ is of the form $\left(  \ref{41}\right)  $
and satisfies conditions $\left(  A1^{\prime}\right),$ $\left(  A2^{\prime
}\right)  ,\left(  A3\right)  ,$ $\left(  A4\right)  ,\left(  A5\right)$ while 
 the controls $w_{k}$ tend to $w_{0}$ in the weak $\ast$
topology of $L^{\infty}\left(  \Omega,\mathbb{R}^{m}\right)  .$ Then $\left(
s\right)  \mathrm{Lim}\sup S_{k}$ $\neq\emptyset$ and $\left(  s\right)
\mathrm{Lim}\sup S_{k}\subset S_{0}$ in $\mathbb{H}_{0}^{\alpha/2}.$
\end{proposition}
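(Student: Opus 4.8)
The plan is to reduce Proposition \ref{weak*_storng} to Proposition \ref{weak_strong} by exploiting the relationship between weak$^{\ast}$ convergence in $L^{\infty}$ and weak convergence in $L^{p}$ on a bounded domain. The key structural observation is that $\Omega$ is bounded, so $L^{\infty}\left(  \Omega,\mathbb{R}^{m}\right)  \hookrightarrow L^{p}\left(  \Omega,\mathbb{R}^{m}\right)  $ continuously for every finite $p$, and moreover the controls live in $\mathcal{W}$, so $w_{k}\left(  x\right)  \in M$ almost everywhere with $M$ bounded. This uniform bound is what makes the reduction work.

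First I would fix any finite exponent $p\in\left(  \tfrac{2n}{n+\alpha},\infty\right)  $ and verify that weak$^{\ast}$ convergence $w_{k}\rightharpoonup^{\ast}w_{0}$ in $L^{\infty}$ forces $w_{k}\rightharpoonup w_{0}$ weakly in $L^{p}$. For a test function $\phi\in L^{p/(p-1)}\left(  \Omega,\mathbb{R}^{m}\right)  $, one approximates $\phi$ in the $L^{p/(p-1)}$ norm by functions in $L^{1}\cap L^{\infty}$ (indeed by bounded functions, which are dense since $\Omega$ has finite measure). Against each such bounded approximant the weak$^{\ast}$ convergence gives $\int_{\Omega}w_{k}\phi\,dx\rightarrow\int_{\Omega}w_{0}\phi\,dx$ directly, because a bounded test function lies in $L^{1}=\left(  L^{\infty}\right)  _{\ast}$. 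The uniform bound $\left\Vert w_{k}\right\Vert _{L^{\infty}}\leq\sup_{y\in M}\left\vert y\right\vert $ controls the error from the approximation uniformly in $k$ via Hölder's inequality, so the limit passes to the original test function $\phi$. This establishes weak $L^{p}$-convergence of the controls.

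Once this is in hand, the conclusion is immediate: the sequence $\left\{  w_{k}\right\}  $ satisfies precisely the hypothesis of Proposition \ref{weak_strong}, since $G$ is assumed to be of the form $\left(  \ref{41}\right)  $ and to satisfy $\left(  A1^{\prime}\right)  ,\left(  A2^{\prime}\right)  ,\left(  A3\right)  ,\left(  A4\right)  ,\left(  A5\right)  $, and $p$ lies in the required range. Therefore $\left(  s\right)  \mathrm{Lim}\sup S_{k}\neq\emptyset$ and $\left(  s\right)  \mathrm{Lim}\sup S_{k}\subset S_{0}$ in $\mathbb{H}_{0}^{\alpha/2}$ follow directly from that proposition applied to this sequence of controls.

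The only delicate point to confirm is that $p$ can be chosen admissibly, i.e. that one may pick a finite $p>\tfrac{2n}{n+\alpha}$ for which the $L^{p}$-growth exponents in $\left(  A2^{\prime}\right)  $ remain compatible; since $n>\alpha$ this interval is nonempty, and the boundedness of $M$ guarantees the $L^{\infty}$ controls embed into every such $L^{p}$. I expect the main obstacle to be the careful justification that weak$^{\ast}$-to-weak passage, specifically handling the density approximation uniformly in $k$ so that the error terms vanish; this is where the boundedness of $\Omega$ and the uniform $L^{\infty}$ bound coming from $w_{k}\in\mathcal{W}$ are essential, and it should be stated explicitly rather than treated as routine.
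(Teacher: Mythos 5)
Your route is genuinely different from the paper's, and most of it is sound. The paper proves the proposition directly, rerunning the uniform-convergence scheme of Propositions \ref{strong_weak}, \ref{strong_strong} and \ref{weak_strong}: it bounds the differences of the functionals and of their derivatives by the integrals $(\ref{*1})$ and $(\ref{*2})$, and sends these to zero by pairing $w_{k}-w_{0}$, which is weak~$\ast$ convergent and bounded in $L^{\infty}$, against the $L^{1}$ functions $G^{2}\left(  \cdot,u_{0}\left(  \cdot\right)  ,v\left(  \cdot\right)  \right)  $ and $G_{u}^{2}\left(  \cdot,u_{0}\left(  \cdot\right)  ,v\left(  \cdot\right)  \right)  $, using continuity of the superposition operators into $L^{1}$. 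You instead deduce weak $L^{p}$ convergence of the controls and quote Proposition \ref{weak_strong}. That first step is correct, and in fact simpler than your density argument: since $\left\vert \Omega\right\vert <\infty$, every $\phi\in L^{p/\left(  p-1\right)  }\left(  \Omega,\mathbb{R}^{m}\right)  $ already lies in $L^{1}\left(  \Omega,\mathbb{R}^{m}\right)  $, so the weak~$\ast$ convergence may be tested against $\phi$ directly; no approximation and no uniform $L^{\infty}$ bound are needed for this implication.

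The gap sits exactly at the point you call delicate and then dismiss for the wrong reason: nonemptiness of $\left(  \frac{2n}{n+\alpha},\infty\right)  $ is irrelevant. The hypothesis $(A2^{\prime})$ is $p$-dependent, and its finite-$p$ form is strictly stronger on $G_{u}^{2},G_{v}^{2}$ than its $p=\infty$ form, since it demands growth of order $s-1-\frac{s}{p}<s-1$. In the present proposition the controls lie in $L^{\infty}$, so what is given is the $p=\infty$ version of $(A2^{\prime})$, say with exponent $s_{0}\in\left(  1,2_{\alpha}^{\ast}\right)  $, whereas Proposition \ref{weak_strong} applied with a finite $p$ consumes the finite-$p$ version; a transfer argument is required and it constrains $p$ beyond $p>\frac{2n}{n+\alpha}$. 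Indeed, as $p$ decreases to $\frac{2n}{n+\alpha}$ its conjugate $\frac{p}{p-1}$ increases to $2_{\alpha}^{\ast}$, and for $G_{u}^{2}=\left\vert u\right\vert ^{s_{0}-1}$ no admissible exponent $s$ exists once $s_{0}\frac{p}{p-1}\geq2_{\alpha}^{\ast}$; so for small admissible $p$ the reduction genuinely breaks, and "apply Proposition \ref{weak_strong}" is not justified as stated. The fix is to take $p$ large: choose $p>\max\left\{  \frac{2n}{n+\alpha},\frac{2_{\alpha}^{\ast}}{2_{\alpha}^{\ast}-s_{0}}\right\}  $ and set $s=s_{0}\frac{p}{p-1}$; then $s\in\left(  1+\frac{1}{p-1},2_{\alpha}^{\ast}\right)  $ and $s-1-\frac{s}{p}=s_{0}-1$, so the given bounds on $G_{u}^{2},G_{v}^{2}$ become exactly the required finite-$p$ bounds, while those on $G_{u}^{1},G_{v}^{1}$ persist because $s>s_{0}$. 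With this choice of $p$ and $s$ inserted, your proof is complete and is a legitimate, arguably tidier, alternative to the paper's direct estimates.
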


\begin{proof}
[Sketch of the proof]As it was pointed out in the proof of Proposition
\ref{weak_strong}, all we need is to demonstrate that $\varphi_{k}%
\rightrightarrows\varphi_{0}$ on $B_{1}\left(  0,r_{1}\right)  \times
B_{2}\left(  0,r_{2}\right)  $ and $\varphi_{k}^{\prime}\rightrightarrows
\varphi_{0}^{\prime}$ on $B_{1}\left(  0,r_{1}\right)  \times B_{2}\left(
0,r_{2}\right)  .$ Assume on the contrary. Note the following estimates
(analogously we consider the sequence $\left\{  v_{l}\right\}  $ and an
arbitrary $u)$%
\begin{equation*}%
\begin{array}
[l]{l}%
\left\vert \varphi_{k}\left(  u_{l},v\right)  -\varphi_{0}\left(
u_{l},v\right)  \right\vert \leq
\int_{\Omega}\left\vert G^{2}\left(  x,u_{0}\left(  x\right)
,v\left(  x\right)  \right)  \left(  w_{k}\left(  x\right)  -w_{0}\left(
x\right)  \right)  \right\vert dx\\
+\int_{\Omega}\left\vert \left(
G^{2}\left(  x,u_{l}\left(  x\right)  ,v\left(  x\right)  \right)
-G^{2}\left(  x,u_{0}\left(  x\right)  ,v\left(  x\right)  \right)  \right)
\left(  w_{k}\left(  x\right)  -w_{0}\left(  x\right)  \right)  \right\vert dx,\\
\left\vert \left\langle \tfrac{\partial\varphi_{k}}{\partial u}\left(
u_{l},v\right)  -\tfrac{\partial\varphi_{0}}{\partial u}\left(  u_{l}%
,v\right)  ,g_{l}\right\rangle \right\vert \leq
\int_{\Omega}\left( \left\vert G_{u}^{2}\left(  x,u_{0}\left(  x\right)
,v\left(  x\right)  \right)  \left(  w_{k}\left(  x\right)  -w_{0}\left(
x\right)  \right)  \right\vert \left\vert g_{l}\left(  x\right)  \right\vert \right.\\
\left. +\left\vert
\left(  G_{u}^{2}\left(  x,u_{l}\left(  x\right)  ,v\left(  x\right)  \right)
-G_{u}^{2}\left(  x,u_{0}\left(  x\right)  ,v\left(  x\right)  \right)
\right)  \left(  w_{k}\left(  x\right)  -w_{0}\left(  x\right)  \right)
\right\vert \left\vert g_{l}\left(  x\right)  \right\vert \right)dx
\end{array}
\end{equation*}
for fixed $v\in H_{0}^{\alpha/2}\left(  \Omega\right)  $ and some sequences
$\left\{  u_{l}\right\}  \subset B_{1}\left(  0,r_{1}\right)  ,$ $\left\{
g_{l}\right\}  \subset B_{1}\left(  0,r_{1}\right)  .$ Since $\left\{
w_{k}\right\}$ tends to $w_{0}$ in the weak $\ast$ topology
of $L^{\infty}\left(  \Omega,\mathbb{R}^{m}\right)  $ and operator
$L^{s}\left(  \Omega\right)  \times L^{s}\left(  \Omega\right)     \ni\left(
u,v\right)  \mapsto Q\left(  \cdot,u\left(  \cdot\right)  ,v\left(
\cdot\right)  \right)  \in L^{1}\left(  \Omega\right)$, for $Q\in \{G^{2},G^{2}_u\}$,
is continuous, it follows that
all right side of the above integrals 
tend to zero that contradicts supposition. The rest follows the lines of the
proofs of Prop. \ref{strong_weak} and \ref{strong_strong}.\qed
\end{proof}



\section{Existence of optimal solutions\label{optimal_section}}

We now formulate the optimal control problem. It transpires that the continuous dependence results from Section
\ref{dependence_section} enable us to prove a theorem on the existence of optimal
processes to some optimal control problem. Specifically, we shall consider
control problem governed by boundary value problem $\left(  \ref{1.1}\right)  $
with the cost functional%
\begin{equation}
J\left(  u,v,w\right)  =\int_{\Omega}\theta\left(  x,u\left(  x\right)
,(-\Delta)^{\alpha/4}u\left(  x\right)  ,v\left(  x\right)  ,(-\Delta
)^{\alpha/4}v\left(  x\right)  ,w\left(  x\right)  \right)  dx \label{5.1}%
\end{equation}
where $\theta:\Omega\times\mathbb{R}^{4+m}\rightarrow\mathbb{R}$ is a given
function. Here $\left(  u,v\right)  \in\mathbb{H}_{0}^{\alpha/2}$ is the
trajectory and $w\in\mathcal{W}=\left\{  w\in L^{p}\left(  \Omega,\mathbb{R}^{m}\right)  :w\left(
x\right)  \in M\text{ for a.e. }x\in\Omega\right\}$ with $p\in\left(  \frac{2n}{n+\alpha},\infty\right]  $ and $M$ being a compact
and convex subset of $\mathbb{R}^{m}.$ Let $\mathcal{D}$ be all admissible triples
\[
\mathcal{D=}\left\{  \left(  u,v,w\right)  \in\mathbb{H}_{0}^{\alpha/2}%
\times\mathcal{W}:\left(  u,v\right)  \text{ is a weak solution to }\left(
\ref{1.1}\right)  \text{ for }w\in\mathcal{W}\right\}  .
\]
Under assumptions of Theorem \ref{existence} the set
of all admissible triples $\mathcal{D}$ is nonempty. In this section, our aim is to find a triple $\left(
u_{w^{\ast}},v_{w^{\ast}},w^{\ast}\right)  \in\mathcal{D}$ that minimizes the cost given by the
functional $\left(  \ref{5.1}\right)  ,$ i.e. we look for a triple $\left(
u_{w^{\ast}},v_{w^{\ast}},w^{\ast}\right)  $ satisfying%
\begin{equation}
J\left(  u_{w^{\ast}},v_{w^{\ast}},w^{\ast}\right)  =\min\limits_{\left(
u,v,w\right)  \in\mathcal{D}}J\left(  u,v,w\right)  . \label{min}%
\end{equation}

On the integrand $\theta$ of the cost functional $\left(
\ref{5.1}\right)  $ we impose the following:

\begin{enumerate}
\item[(A6)] the function $\theta=\theta\left(  x,u,p,v,q,w\right)  $ is
measurable with respect to $x$ for all $\left(  u,p,v,q,w\right)
\in\mathbb{R}^{4}\times M$, continuous w. r. t. $\left(
u,p,v,q,w\right)  $ for a.e. $x\in\Omega$ and convex w. r. t. $w$ for
all $\left(  u,p,v,q\right)  \in\mathbb{R}^{4}$ and a.e. $x\in\Omega$.
Moreover, there exists a constant $c>0$ such that%
\[
\left\vert \theta\left(  x,u,p,v,q,w\right)  \right\vert \leq c\left(
1+\left\vert u\right\vert ^{s}+\left\vert p\right\vert ^{2}+\left\vert
v\right\vert ^{s}+\left\vert q\right\vert ^{2}\right)
\]
for a.e. $x\in\Omega$, all $u\in\mathbb{R}$, $p\in\mathbb{R}$, $v\in
\mathbb{R}$, $q\in\mathbb{R}$, $w\in M$ and for some $s\in(1,2_{\alpha}^{\ast
})$;

\item[(A7)] there exist a function $\eta\in L^{1}\left(  \Omega\right)  $ and
a constant $C>0$ such that
\[
\theta\left(  x,u,p,v,q,w\right)  \geq\eta\left(  x\right)  -C\left(
\left\vert u\right\vert +\left\vert p\right\vert +\left\vert v\right\vert
+\left\vert q\right\vert +\left\vert w\right\vert \right)
\]
for all $u\in\mathbb{R}$, $p\in\mathbb{R}$, $v\in\mathbb{R}$, $q\in\mathbb{R}%
$, $w\in M$ and a.e. $x\in\Omega$.
\end{enumerate}

Now we prove a theorem on the existence of optimal processes to $\left(  \ref{min}\right)  .$

\begin{theorem}
If the function $G$ of the form $\left(  \ref{41}\right)  $ satisfies $\left(
A1^{\prime}\right)  $, $\left(  A2^{\prime}\right)  $, $\left(  A3\right)  $,
$(A4)$, $\left(  A5\right)  $ and the integrand $\theta$ meets assumptions
$\left(  A6\right)  $, $\left(  A7\right)  $, then the optimal control problem
$\left(  \ref{min}\right)  $ possesses at least one optimal process $\left(
u_{w^{\ast}},v_{w^{\ast}},w^{\ast}\right)  .\label{main_th}$
\end{theorem}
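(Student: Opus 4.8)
The plan is to run the \emph{direct method} on the cost functional $J$ over the admissible set $\mathcal{D}$, using the uniform boundedness of solutions from Theorem \ref{existence} for compactness and the continuous dependence of Proposition \ref{weak_strong} (or Proposition \ref{weak*_storng} when $p=\infty$) to keep the limit admissible. First I would record that the problem is well posed. By Theorem \ref{existence} together with Remark \ref{coincidence}, $\mathcal{D}\neq\emptyset$, and every admissible state lies in the fixed balls $B_{1}\left(0,r_{1}\right)\times B_{2}\left(0,r_{2}\right)$ produced there, so $\left\Vert u\right\Vert_{H_{0}^{\alpha/2}}$, $\left\Vert v\right\Vert_{H_{0}^{\alpha/2}}$ are bounded uniformly over $\mathcal{D}$; since also $w\left(x\right)\in M$ is bounded, assumption $\left(A7\right)$ forces $J\left(u,v,w\right)\geq\int_{\Omega}\eta-C\left(\cdots\right)>-\infty$ uniformly. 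Hence $d:=\inf_{\mathcal{D}}J$ is finite, and I pick a minimizing sequence $\left(u_{k},v_{k},w_{k}\right)\in\mathcal{D}$ with $J\left(u_{k},v_{k},w_{k}\right)\to d$.

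The second step is the extraction of limits. The controls satisfy $w_{k}\left(x\right)\in M$ a.e., so $\left\{w_{k}\right\}$ is bounded in $L^{p}$; as $\mathcal{W}$ is convex, bounded and closed under a.e. limits, it is weakly (weak$\ast$) closed, and passing to a subsequence I get $w_{k}\rightharpoonup w_{0}\in\mathcal{W}$ weakly in $L^{p}$ (or weak$\ast$ in $L^{\infty}$). Because $\left(u_{k},v_{k}\right)\in S_{w_{k}}$ and these saddle-point sets are uniformly bounded in the reflexive Hilbert space $\mathbb{H}_{0}^{\alpha/2}$, I may also assume $\left(u_{k},v_{k}\right)\rightharpoonup\left(u_{0},v_{0}\right)$ weakly. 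Now the decisive observation: Proposition \ref{weak_strong} (resp. Proposition \ref{weak*_storng}) applies \emph{verbatim} to the sequence $S_{k}=S_{w_{k}}$ and yields, along a further subsequence, strong convergence $\left(u_{k},v_{k}\right)\to\left(u_{0},v_{0}\right)$ in $\mathbb{H}_{0}^{\alpha/2}$ together with $\left(u_{0},v_{0}\right)\in S_{0}=S_{w_{0}}$. By Remark \ref{coincidence} the element of $S_{w_{0}}$ is a weak solution of $\left(\ref{42}\right)$ for $w_{0}$, so $\left(u_{0},v_{0},w_{0}\right)\in\mathcal{D}$ is admissible.

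It remains to establish sequential lower semicontinuity of $J$ along this sequence. Here the strong convergence of the states, granted for free by the continuous dependence propositions, is what I exploit: $u_{k}\to u_{0}$ and $v_{k}\to v_{0}$ strongly in $H_{0}^{\alpha/2}$ give $(-\Delta)^{\alpha/4}u_{k}\to(-\Delta)^{\alpha/4}u_{0}$ and $(-\Delta)^{\alpha/4}v_{k}\to(-\Delta)^{\alpha/4}v_{0}$ strongly in $L^{2}$, and $u_{k}\to u_{0}$, $v_{k}\to v_{0}$ strongly in $L^{s}$ by the compact embedding, while $w_{k}$ converges only weakly. Since by $\left(A6\right)$ the integrand $\theta$ is a Carath\'{e}odory function with the stated growth that is convex in $w$, the classical lower semicontinuity theorem for integral functionals (of Ioffe/Olech type: convex in the weakly converging variable, continuous in the strongly converging ones) gives $\liminf_{k}J\left(u_{k},v_{k},w_{k}\right)\geq J\left(u_{0},v_{0},w_{0}\right)$. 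Combined with admissibility, $J\left(u_{0},v_{0},w_{0}\right)\geq d$, and with $J\left(u_{k},v_{k},w_{k}\right)\to d$, this forces $J\left(u_{0},v_{0},w_{0}\right)=d=\min_{\mathcal{D}}J$, so $\left(u_{0},v_{0},w_{0}\right)$ is the sought optimal process.

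I expect the genuine obstacle to be this last lower semicontinuity step, namely reconciling the merely weak convergence of the controls with the passage to the limit in the nonlinear integrand $\theta$; convexity of $\theta$ in $w$ from $\left(A6\right)$ is exactly what rescues it, whereas the dependence on $\left(u,(-\Delta)^{\alpha/4}u,v,(-\Delta)^{\alpha/4}v\right)$ is harmless precisely because the continuous dependence results upgrade weak to strong convergence of the states. A minor point to handle carefully is the weak$\ast$ closedness of $\mathcal{W}$ in the case $p=\infty$, which I would justify by the convexity and compactness of $M$.
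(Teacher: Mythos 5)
Your proposal is correct and follows essentially the same route as the paper's proof: a minimizing sequence, weak (resp.\ weak$\ast$) compactness of the controls from compactness and convexity of $M$, strong convergence of the corresponding saddle-point states plus admissibility of the limit triple via Propositions \ref{weak_strong} and \ref{weak*_storng} together with Remark \ref{coincidence}, and finally lower semicontinuity of $J$ from $(A6)$, $(A7)$ via the classical Ioffe/Olech-type theorems (convexity in the weakly converging control, strong convergence of the state variables). Your additional remarks (finiteness of the infimum via $(A7)$ and weak$\ast$ closedness of $\mathcal{W}$) are harmless refinements of the same argument.
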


\begin{proof}
From $\left(  A6\right)  $, $\left(  A7\right)  $ and classical theorems on
semicontinuity of integral functional, see 
\cite[Theorem 1.1]{Olech} or \cite[Theorem 5]{Iof}, we deduce that $J$ is
lower semicontinuous with respect to the strong topology in the space
$\mathbb{H}_{0}^{\alpha/2}$ and either the weak topology of $L^{p}\left(
\Omega,\mathbb{R}^{m}\right)  $ for $p\in\left(2n / \left(n+\alpha\right)%
,\infty\right)  $ or the weak $\ast$ topology of $L^{\infty}\left(
\Omega,\mathbb{R}^{m}\right)  $, since convergence of any sequence $\left\{
u_{k}\right\}$ in $H_{0}^{\alpha/2}\left(  \Omega\right)  $
implies the strong convergence of $\left\{  u_{k}\right\}$
in $L^{s}\left(  \Omega\right)  $ with $s\in\left(  1,2_{\alpha}^{\ast
}\right)  $ and the strong convergence of $\left\{  (-\Delta)^{\alpha/4}%
u_{k}\right\}$ in $L^{2}\left(  \Omega\right)  $ and
moreover we have the same implications for convergence of any sequence
$\left\{  v_{k}\right\}$ in $H_{0}^{\alpha/2}\left(
\Omega\right)  .$

Next, let $\left\{  \left(  u_{k},v_{k},w_{k}\right)  \right\}\subset\mathcal{D}$ 
be a minimizing sequence for $\left(  \ref{min}\right)  $, i.e.
\begin{equation}
\lim\limits_{k\rightarrow\infty}J\left(  u_{k},v_{k},w_{k}\right)
=\inf\limits_{\left(  u,v,w\right)  \in\mathcal{D}}J\left(  u,v,w\right)
=\vartheta.\label{4.4*}%
\end{equation}
Since the set $M$ is compact and convex, we see that the sequence $\left\{
w_{k}\right\}$ is compact in the weak topology of
$L^{p}\left(  \Omega,\mathbb{R}^{m}\right)  $ for $p\in\left(
2n / \left(n+\alpha\right),\infty\right)  $ or the weak $\ast$ topology of $L^{\infty
}\left(  \Omega,\mathbb{R}^{m}\right)  ,$ respectively$.$ Passing to
subsequence if necessary, one can assume that $\left\{w_{k}\right\}$ tends to some $w_{0}%
\in\mathcal{W}$ weakly in $L^{p}\left(  \Omega,\mathbb{R}^{m}\right)  $ or
$\left\{w_{k}\right\}$ tends to some $w_{0}\in\mathcal{W}$ weakly $\ast$ in $L^{\infty
}\left(  \Omega,\mathbb{R}^{m}\right)  ,$ respectively$.$ By assumption
$\left(  A5\right)  ,$ the set of the weak solutions of problem $\left(
\ref{1.1}\right)  $ coincides with the set of saddle points of the functional
$F_{w_{k}}$ on the space $\mathbb{H}_{0}^{\alpha/2}.$ 
By Propositions \ref{weak_strong} or \ref{weak*_storng},
the sequence $\left\{  \left(  u_{k},v_{k}\right)  \right\},$ 
or at least some its subsequence, tends to $\left(  u_{0},v_{0}\right)  $
in $\mathbb{H}_{0}^{\alpha/2}$ and the triple $\left(  u_{0},v_{0}%
,w_{0}\right)  $ is an admissible triple for control problem $\left(
\ref{1.1}\right)  $. Due to the lower semicontinuity of $J$, we have
\begin{equation}
J\left(  u_{0},v_{0},w_{0}\right)  \leq\liminf\limits_{k\rightarrow\infty
}J\left(  u_{k},v_{k},w_{k}\right)  \label{4.5*}%
\end{equation}
provided $\left\{\left(  u_{k},v_{k}\right) \right\} $ tends to $\left(  u_{0},v_{0}\right)
$ in $\mathbb{H}_{0}^{\alpha/2}$ and $w_{k}\rightharpoonup w_{0}$ weakly in
$L^{p}\left(  \Omega,\mathbb{R}^{m}\right)  $ or $w_{k}\overset{\ast
}{\rightharpoonup}w_{0}$ weakly $\ast$ in $L^{\infty}\left(  \Omega
,\mathbb{R}^{m}\right)  ,$ respectively. Furthermore, by $(\ref{4.4*})$ and
$(\ref{4.5*})$
\[
\vartheta\leq J\left(  u_{0},v_{0},w_{0}\right)  \leq\liminf
\limits_{k\rightarrow\infty}J\left(  u_{k},v_{k},w_{k}\right)  =\inf
\limits_{\left(  u,v,w\right)  \in\mathcal{D}}J\left(  u,v,w\right)
=\vartheta.
\]
Thus, $J\left(  u_{0},v_{0},w_{0}\right)  =\vartheta=\inf\nolimits_{\left(
u,v,w\right)  \in\mathcal{D}}J\left(  u,v,w\right)  .$ It means that the
process $\left(  u_{w^{\ast}},v_{w^{\ast}},w^{\ast}\right)  =\left(
u_{0},v_{0},w_{0}\right)  $ is optimal for $\left(  \ref{min}\right)  $.\qed
\end{proof}

\begin{example}
\label{przyklad1}Let $
\Omega=P^{3}\left(  0,\pi\right)  =\left\{  x\in\mathbb{R}^{3}:0<x_{i}%
<\pi,\text{ }i=1,2,3\right\}.$
Note that $u_{1}=\sin x_{1}\sin x_{2}\sin x_{3}$ and $\rho_{1}=3$ are
eigenfunction and eigenvalue for $-\Delta$ on $H_{0}^{1}\left(  \Omega\right)
$ since $-\Delta u_{1}=3u_{1}.$ Similarly, $\left(  -\Delta\right)
^{\alpha/2}u_{1}=3^{\alpha/2}u_{1}$ hence, $3^{\alpha/2}$ is the first
eigenvalue for $\left(  -\Delta\right)  ^{\alpha/2}$ in this case. Consider
the following linear control problem
\begin{equation}
\left\{
\begin{array}
[c]{l}%
-(-\Delta)^{\alpha/2}u\left(  x\right)  +\beta_{1}u\left(  x\right)
+w_{1}\left(  x\right)  v\left(  x\right)  +l_{1}\left(  x\right)  =0\\
(-\Delta)^{\alpha/2}v\left(  x\right)  -\beta_{2}v\left(  x\right)
+w_{2}\left(  x\right)  u\left(  x\right)  +l_{2}\left(  x\right)  =0\\
u\left(  x\right)  =0,\text{ }v\left(  x\right)  =0
\end{array}
\right.
\begin{array}
[c]{c}%
\text{in }\Omega\\
\text{in }\Omega\\
\text{on }\partial\Omega
\end{array}
\label{3.5}%
\end{equation}
with $\beta_{i}<3^{\alpha/2},$ $l_{i}\in L^{2}\left(  \Omega\right)  $, $i=1,2$, 
$
\mathcal{W}=\left\{  w\in L^{p}\left(  \Omega,\mathbb{R}^{2}\right)  :w \in\left[  0,1\right]^2  \text{ a.e. on } \Omega\right\}
$ with $p\in\left(3/\alpha,\infty\right)  .$ The functional of action
for control problem $(\ref{3.5})$ is of the form
\begin{align*}
F_{w}\left(  u,v\right)   &  =\int\nolimits_{\Omega}\left(  \tfrac{1}%
{2}\left\vert (-\Delta)^{\alpha/4}v\left(  x\right)  \right\vert ^{2}%
-\tfrac{1}{2}\left\vert (-\Delta)^{\alpha/4}u\left(  x\right)  \right\vert
^{2}+\tfrac{\beta_{1}}{2}\left\vert u\left(  x\right)  \right\vert ^{2}%
-\tfrac{\beta_{2}}{2}\left\vert v\left(  x\right)  \right\vert ^{2}\right.  \\
&  \left.  +\left(  w_{1}\left(  x\right)  +w_{2}\left(  x\right)  \right)
u\left(  x\right)  v\left(  x\right)  +l_{1}\left(  x\right)  u\left(
x\right)  +l_{2}\left(  x\right)  v\left(  x\right)  \vphantom{\int_0}\right)
dx.
\end{align*}
The cost, for $s\in\left(  1,6/\left(3-\alpha\right)\right),$  is
$J\left(  u,v,w\right)  =\int\nolimits_{\Omega}\left(  u^{s}\left(  x\right)
+v^{s}\left(  x\right)  +\left\vert w\left(  x\right)  \right\vert
^{2}\right)  dx.$

\end{example}

\begin{example}
Let $\Omega=P^{3}\left(  0,\pi\right)  $. The control problem now is of the form%
\begin{equation*}
\left\{
\begin{array}
[c]{l}%
-(-\Delta)^{\alpha/2}u\left(  x\right)  +bu\left(  x\right)  -s\left\vert
x\right\vert ^{2}u^{s-1}\left(  x\right)  w_{1}\left(  x\right)  -\left\vert
x\right\vert w_{2}\left(  x\right)  +v\left(  x\right)  =0\\
(-\Delta)^{\alpha/2}v\left(  x\right)  -av\left(  x\right)  +s\left\vert
x\right\vert ^{2}v^{s-1}\left(  x\right)  w_{1}\left(  x\right)  -\left\vert
x\right\vert w_{2}\left(  x\right)  +u\left(  x\right)  =0\\
u\left(  x\right)  =0,\text{ }v\left(  x\right)  =0
\end{array}
\right.
\begin{array}
[c]{l}%
\text{in }\Omega\\
\text{in }\Omega\\
\text{on }\mathbb{\partial}\Omega
\end{array}
\label{4.9}%
\end{equation*}
for $1+1/\left(p-1\right)<s<6/\left(3-\alpha\right)$ with $p>6/\left(3+\alpha\right)  $ 
or $1<s<6/\left(3-\alpha\right)$ with $p=\infty.$ Now,
the cost is given by
$
J\left(  u,v,w\right)  =\int_{\Omega}\left( u^{s}\left(  x\right)
+\left\vert (-\Delta)^{\alpha/4}u\left(  x\right)  \right\vert ^{2}%
w_{1}\left(  x\right)  +\right.\\
\left.\left\vert (-\Delta)^{\alpha/4}v\left(  x\right)
\right\vert ^{2}w_{2}\left(  x\right)  -\left\vert x\right\vert (-\Delta
)^{\alpha/4}u\left(  x\right)  +\left\vert w\left(  x\right)  \right\vert
^{2}\right)  dx
$
where $a<3^{\alpha/2},$ $b<3^{\alpha/2}$ and $M=\left[  0,1\right]^2.$ 
Obviously, the functional of action in this case has the form
$
F_{w}\left(  u,v\right)  =\int\nolimits_{\Omega}  \left(\tfrac{1}%
{2}\left\vert (-\Delta)^{\alpha/4}v\left(  x\right)  \right\vert ^{2}%
-\tfrac{1}{2}\left\vert (-\Delta)^{\alpha/4}u\left(  x\right)  \right\vert
^{2}-\tfrac{a}{2}v^{2}\left(  x\right)  +\tfrac{b}{2}u^{2}\left(  x\right)+\right.\\
\left.\left\vert x\right\vert ^{2}v^{s}\left(  x\right)  w_{1}\left(  x\right)
  -\left\vert x\right\vert ^{2}u^{s}\left(  x\right)  w_{1}\left(
x\right)  -u\left(  x\right)  \left\vert x\right\vert w_{2}\left(  x\right)
-v\left(  x\right)  \left\vert x\right\vert w_{2}\left(  x\right)  +u\left(
x\right)  v\left(  x\right)\right)   dx
$.
One can check that $F_{w}$, $J$ satisfy
assumptions of Theorems \ref{existence} and \ref{main_th}.
\end{example}






\end{document}